\documentclass[a4paper]{article}
\usepackage{amsmath,amssymb,amscd,amsthm}
\usepackage{epic,eepic}
\usepackage[dvips]{graphics}
\usepackage{comment}

\numberwithin{equation}{section}

\setlength{\unitlength}{1cm}

\DeclareFontEncoding{OT2}{}{}
\DeclareFontSubstitution{OT2}{wncyr}{m}{n}
\DeclareSymbolFont{cyss}{OT2}{wncyss}{m}{n}
\DeclareSymbolFont{cyr}{OT2}{wncyr}{m}{n}
\DeclareMathSymbol{\sh}{\mathbin}{cyss}{`x}

\newcommand{\Li}{\operatorname{Li}}
\newcommand{\reg}{\operatorname{reg}}

\newcommand{\C}{{\mathbf C}}
\newcommand{\D}{{\mathbf D}}
\newcommand{\Q}{{\mathbf Q}}
\newcommand{\bP}{{\mathbf P}}
\newcommand{\bunit}{{\mathbf I}}
\newcommand{\bnull}{{\mathbf 1}}

\newcommand{\fX}{{\mathfrak X}}
\newcommand{\cL}{{\mathcal L}}
\newcommand{\cM}{{\mathcal M}}
\newcommand{\cU}{{\mathcal U}}
\newcommand{\tcU}{{\widetilde{\mathcal U}}}
\newcommand{\hcL}{{\widehat{\mathcal L}}}

\newcommand{\tF}{\tilde{F}}
\newcommand{\tf}{\tilde{f}}

\newcommand{\ds}{\displaystyle}
\newtheorem{thm}{Theorem}

\newtheorem{prop}[thm]{Proposition}
\newtheorem{lem}[thm]{Lemma}

\title{Fundamental solutions of the Knizhnik-Zamolodchikov equation of one variable and the Riemann-Hilbert problem}
\author{OI, Shu\thanks{Department of Mathematics, College of Science, Rikkyo  University. \endgraf \hspace{1em}3-34-1, 
Nishi-Ikebukuro, Toshima-ku, Tokyo 171-8501, Japan. \endgraf \hspace{1em}e-mail: {\tt shu-oi@rikkyo.ac.jp}}
\and UENO, Kimio\thanks{Department of Mathematics, 
School of Fundamental Sciences and Engineering,\endgraf \hspace{1em} Faculty of Science and Engineering, 
Waseda University. \endgraf \hspace{1em}3-4-1, Okubo, Shinjuku-ku, 
Tokyo 169-8555, Japan. \endgraf \hspace{1em}e-mail: {\tt uenoki@waseda.jp}
}}
\date{}

\begin{document}


\insert\footins{\footnotesize 2010 {\it Mathematics Subject Classification.} Primary 34M50,11G55; 
Secondary 30E25,11M06,32G34;}
\maketitle

\begin{abstract}
In this article, we derive multiple polylogarithms from multiple zeta values 
by using a recursive Riemann-Hilbert problem of additive type. Furthermore we show that 
this Riemann-Hilbert problem is regarded as an inverse problem for the connection problem of the KZ equation 
of one variable, so that the fundamental solutions to the equation are derived from the Drinfel'd associator 
by using a Riemann-Hilbert problem of multiplicative type. 
These results say that the duality relation for the Drinfel'd associator can be interpreted as the solvability condition for this inverse problem.
\end{abstract}

\section{Introduction}
In \cite{OiU2}, we showed that the polylogarithms $\Li_k(z)$ are characterized by the inversion formula of polylogarithms 
\begin{equation}
\Li_k(z)+\sum_{j=1}^{k-1}\frac{(-1)^j\log^j z}{j!}\Li_{k-j}(z)+\Li_{2,\underbrace{\scriptstyle 1,\ldots,1}_{k-2}}(1-z)
=\zeta(k), \label{eq:inversion01}
\end{equation}
which is viewed as a recursive Riemann-Hilbert problem of additive type. Generalizing this scheme, 
we give a characterization of the multiple polylogarithms of one variable $\Li_{k_1,\ldots,k_r}(z)$.\\

The multiple polylogarithms of one variable are holomorphic functions on $|z|<1$ determined by the Taylor expansions
\begin{equation}\label{eq:1MPL}
\Li_{k_1,\ldots,k_r}(z)=\sum_{n_1>\cdots>n_r>0}\frac{z^{n_1}}{n_1^{k_1}\cdots n_r^{k_r}}
\end{equation}
where $r \ge 1, k_1,\ldots,k_r\ge 1$. 
By using iterated integrals \eqref{eq:iterated_integral}, they can be expressed as 
\begin{equation}
\Li_{k_1,\ldots,k_r}(z) = \int_0^z \Big(\frac{dz}{z}\Big)^{k_1-1} \frac{dz}{1-z} \cdots 
                        \Big(\frac{dz}{z}\Big)^{k_r-1} \frac{dz}{1-z},\vspace*{1mm}
\end{equation}
and can be continued onto $\bP^1\setminus\{0,1,\infty\}$ as many-valued analytic functions 
along the integral path, where $\bP^1$ denotes the Riemann sphere. 

The generating function \eqref{eq:fundamental0} of the multiple polylogarithms yields a fundamental solution 
of the Knizhnik-Zamolodchikov equation (the KZ equation, for short) of one variable. 
This is an ordinary differential equation
\begin{equation}\label{eq:1KZ}
\frac{dG}{dz}=\left(\frac{X_0}{z}+\frac{X_1}{1-z}\right)G
\end{equation}
defined on the moduli space
\begin{equation*}
\cM_{0,4}=\bP^1 \setminus\{0,1,\infty\},
\end{equation*}
where $X_0, X_1$ are generators of the free Lie algebra $\fX=\C\{X_0,X_1\}$, which is a Lie algebra derived 
from the lower central series of the fundamental group of $\cM_{0,4}$ \cite{I}. 
The 1-forms $\xi_0=\frac{dz}{z}$ and $\xi_1=\frac{dz}{1-z}$ are considered as dual variables of $X_0, X_1$ 
and generate a shuffle algebra. This algebra describes iterated integrals of the forms $\xi_0$ and $\xi_1$.

Moreover the connection matrix between the fundamental solution $\cL^{(0)}(z)$ of \eqref{eq:1KZ} normalized 
at $z=0$ (see Section \ref{sec:prep}, Proposition \ref{prop:KZE1}) and the fundamental solution $\cL^{(1)}(z)$ 
normalized at $z=1$ is given by the Drinfel'd associator $\varPhi_{\rm KZ}$; namely
\begin{equation}
    \left(\cL^{(1)}(z)\right)^{-1}\cL^{(0)}(z)=\varPhi_{\rm KZ}. \label{eq:connection01}
\end{equation}
The elements $\cL^{(0)},\, \cL^{(1)}$ and $\varPhi_{\rm KZ}$ are grouplike elements of 
$\widetilde{\cU}(\fX)=\C\langle\langle X_0,X_1 \rangle\rangle$, which denotes 
the non-commutative formal power series algebra of the variables $X_0, X_1$. 

The Drinfel'd associator $\varPhi_{\rm KZ}$ is expressed as the generating function \eqref{eq:DA} of 
the multiple zeta values
\begin{equation}
\zeta(k_1,k_2,\ldots,k_r)=\sum_{n_1>\cdots>n_r>0}\frac{1}{n_1^{k_1}\cdots n_r^{k_r}}.
\end{equation}
In \cite{OkU}, it was shown that the connection relation \eqref{eq:connection01} is equivalent to 
the system of the functional relations \eqref{eq:gif} among extended multiple polylogarithms
\begin{align}\label{eq:gif_ext}
              \sum_{uv=w}\Li(\tau(u);1-z)\Li(v;z)=\zeta(\reg^{10}(w)),
\end{align}
referred to as ``the generalized inversion formulas.'' 
Here $w$ denotes a word of $\xi_0$ and $\xi_1$. This system contains the inversion formulas \eqref{eq:inversion01} 
of polylogarithms as the special case of $w=\xi_0^{k-1}\xi_1$. The duality relations of multiple zeta values
\begin{equation}\label{eq:duality_mzv}
      \zeta(\reg^{10}(w)) = \zeta(\reg^{10}(\tau(w)))
\end{equation}
is included in these relations.

In this article, we consider an inverse problem for the generalized inversion formulas \eqref{eq:gif_ext}, 
and show that this problem is nothing but a recursive Riemann-Hilbert problem (or a Plemelj-Birkhoff decomposition) 
of additive type \cite{Bi}, \cite{M}, \cite{P}. For any word $w$ of $\xi_0, \, \xi_1$, and a certain asymptotic condition, 
this problem has a unique solution (Theorem \ref{thm:MPL_RH}). \\

Furthermore this result can be interpreted as an inverse problem for the connection problem of the KZ equation 
\eqref{eq:connection01}. 
Thus one can establish the existence and uniqueness of a solution $F^{(0)}(z)$ and $F^{(1)}(z)$ to the equation
\begin{equation}\label{eq:mRH}
                \left(F^{(1)}(z)\right)^{-1}F^{(0)}(z)=\varPhi_{\rm KZ}
\end{equation}
under certain assumptions (Theorem \ref{thm:KZ_RH}). 
In other words, the solutions $\cL^{(0)}(z)$ and $\cL^{(1)}(z)$ are completely characterized by 
the Riemann-Hilbert problem of multiplicative type \eqref{eq:mRH}.\\

This article is organized as follows: In Section 2, we prepare some terminologies about free Lie algebras 
and shuffle algebras, and survey the connection problem of the KZ equation of one variable due to \cite{OkU} and 
\cite{OiU1}. In Section 3, we consider in details the generalized inversion formulas. 
We prove the generalized inversion formulas independently of the connection relation \eqref{eq:connection01}, 
and show that the duality relation \eqref{eq:duality_mzv} is obtained as the consistency condition. 
In Section 4, we formulate and solve the recursive Riemann-Hilbert problem of additive type corresponding to 
the inverse problem for the generalized inversion formulas.
Finally, in Section 5, we consider the inverse problem for the connection problem of the KZ equation 
and show that it has a unique solution. 

\paragraph{\bf Acknowledgment}

The second author is supported by JSPS KAKENHI Grant Number 25400054.

\section{The connection problem of the KZ equation of one variable}\label{sec:prep}

Let $\fX=\C\{X_0,X_1\}$ be the free Lie algebra generated by $X_0$ and $X_1$ and $\cU=\cU(\fX)$ be 
the universal enveloping algebra of $\fX$. $\cU$ is the non-commutative polynomial algebra 
$\C\langle X_0,X_1 \rangle$ generated by $X_0, X_1$ with the unit $\bunit$.

The algebra $\cU$ has a co-commutative Hopf algebra structure by the following coproduct $\Delta$, 
the counit $\varepsilon$ as algebra morphisms and the antipode $\rho$ as an anti-algebra morphism:
\begin{align*}
\Delta(X_i)&=\bunit\otimes X_i+X_i\otimes \bunit,\\
\varepsilon(X_i)&=0,\\
\rho(X_i)&=-X_i.
\end{align*}

The Hopf algebra $\cU$ has a grading defined by the length of words; 
\begin{equation}
        \cU=\bigoplus_{s=0}^\infty \cU_s.
\end{equation}
We also denote by $\tcU=\tcU(\fX)$ the completion of $\cU$ with respect to this grading. 
$\tcU$ is the non-commutative formal power series algebra $\C\langle\langle X_0,X_1 \rangle\rangle$.

Let $S=S(\xi_0,\xi_1)$ be a shuffle algebra generated by 1-forms
\begin{equation}
\xi_0=\frac{dz}{z},\qquad \xi_1=\frac{dz}{1-z}.
\end{equation}
This is the non-commutative polynomial algebra $\C\langle \xi_0,\xi_1 \rangle$ generated by $\xi_0,\xi_1$ 
with the shuffle product $\sh$. The shuffle product is defined recursively as
\begin{align*}
w \sh \bnull&=\bnull\sh w=w,\\
(\xi_{i_1} w_1) \sh (\xi_{i_2} w_2)&=\xi_{i_1} (w_1 \sh (\xi_{i_2} w_2)) + \xi_{i_2} ((\xi_{i_1} w_1) \sh w_2),
\end{align*}
where $\bnull$ is the unit of $\C\langle \xi_0,\xi_1 \rangle$ (that is, $\bnull$ stands for the empty word) 
and $w,w_1,w_2$ are words of $\C\langle \xi_0,\xi_1 \rangle$.

By virtue of Reutenauer \cite{R}, $S$ is an associative commutative algebra and has a Hopf algebra structure by the coproduct
\begin{equation*}
\Delta^{*}(\xi_{i_1}\cdots \xi_{i_r})=\sum_{k=0}^r \xi_{i_1}\cdots \xi_{i_k}\otimes \xi_{i_{k+1}}\cdots \xi_{i_{r}}
\end{equation*}
(we regard $\xi_{i_1}\cdots \xi_{i_0}$ (at $k=0$) and $\xi_{i_{r+1}}\cdots \xi_{i_r}$ (at $k=r$) as $\bnull$), 
the counit $\varepsilon^{*}(\xi_i)=0$ and the antipode 
$\rho^{*}(\xi_{i_1}\cdots \xi_{i_r})=(-1)^r \xi_{i_r}\cdots \xi_{i_1}$.

The shuffle algebra
\begin{equation}
S=\bigoplus_{s=0}^{\infty}S_s
\end{equation}
is also a graded Hopf algebra with the grading defined by the length of words. The dual of this Hopf algebra is 
the algebra $\widetilde{\cU}$ defined above with respect to the pairing
\begin{equation*}
\langle \xi_{i_1}\cdots \xi_{i_r},X_{j_1}\cdots X_{j_s}\rangle=
\begin{cases}
1&(r=s, i_k=j_k \text{ for }1\le k \le r),\\
0&\text{(otherwise)}.
\end{cases}
\end{equation*}

In what follows, we denote conveniently the sum over all words $w$ in $S$ by $\ds \sum_{w \in S}$ (or similar notations), 
and the dual element of $w$ by the capital letter $W$ (that is, for $w=\xi_{i_1}\cdots\xi_{i_r} \in S$, 
the capitalization $W$ stands for $X_{i_1}\cdots X_{i_r} \in \cU$).\\

The following lemmas are basic and will be used in Section \ref{sec:KZ_RH}.

\begin{lem}\label{lem:grouplike_shufflehom}
A $\tcU$-valued function
\begin{equation*}
F(z)=\sum_{w \in S}f(w;z)W,
\end{equation*}
which is holomorphic at $z=0$ and $F(0)=\bunit$, is grouplike if and only if $f(w;z)$ is a shuffle homomorphism.
\end{lem}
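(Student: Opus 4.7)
The plan is to recognize the statement as an immediate consequence of the Hopf-algebra duality between $S$ and $\tcU$ described in the excerpt. First I would note that $F(z)$ being grouplike in $\tcU$ means $\Delta(F(z)) = F(z) \otimes F(z)$ (and $\varepsilon(F(z)) = 1$), and that by the very definition of the expansion $F(z) = \sum_{w} f(w;z) W$ together with the pairing convention, one has $f(w;z) = \langle w, F(z) \rangle$ for each word $w$.

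The key ingredient is the Hopf-pairing identity
$$\langle u \sh v,\, G \rangle \;=\; \langle u \otimes v,\, \Delta(G) \rangle, \qquad u, v \in S,\; G \in \tcU,$$
which expresses that the shuffle product on $S$ is dual to the coproduct on $\tcU$ making each $X_i$ primitive, just as concatenation on $\tcU$ is dual to the deconcatenation coproduct $\Delta^*$ on $S$. I would establish this on monomials $G = W$ by a direct combinatorial comparison: the left-hand side counts occurrences of $W$ among the words forming $u \sh v$, while the right-hand side reads off the interleavings of $u$ and $v$ in the explicit expansion $\Delta(W) = \prod_{k}\bigl(X_{i_k}\otimes\bunit + \bunit\otimes X_{i_k}\bigr)$. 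Both expressions are the same shuffle count.

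Granted this identity, both implications collapse to one line. If $F(z)$ is grouplike, then for any words $u, v$,
$$f(u \sh v;z) = \langle u \sh v,\, F(z) \rangle = \langle u \otimes v,\, F(z)\otimes F(z)\rangle = f(u;z)\, f(v;z),$$
and $f(\bnull;z) = \langle \bnull, F(z)\rangle = \varepsilon(F(z)) = 1$, so $f(\cdot;z)$ is a shuffle homomorphism. Conversely, if $f(\cdot;z)$ is a shuffle homomorphism, then $\langle u \otimes v,\, \Delta(F(z))\rangle = \langle u \otimes v,\, F(z)\otimes F(z)\rangle$ for every pair of words $u, v$; since the words form dual bases and the pairing is non-degenerate, this forces $\Delta(F(z)) = F(z) \otimes F(z)$, while $f(\bnull;z) = 1$ supplies $\varepsilon(F(z)) = 1$.

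The main (mild) obstacle is bookkeeping on the completion: $\Delta$ sends $\tcU$ into the completed tensor product $\tcU \widehat\otimes \tcU$, so one must confirm that pairing against a fixed homogeneous $u \otimes v$ only sees finitely many graded components of $F(z)$. This is automatic because the pairing respects the length grading on both sides, so no convergence issue arises.
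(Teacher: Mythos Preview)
Your argument is correct. The paper, however, does not supply a proof of this lemma at all: it is introduced with the sentence ``The following lemmas are basic and will be used in Section~\ref{sec:KZ_RH}'' and then simply stated, together with Lemma~\ref{lem:grouplike_reciprocal}, as a standard fact about the Hopf duality between $S$ and $\tcU$. So there is no proof in the paper to compare against; your write-up is precisely the routine verification the authors leave to the reader, and the Hopf-pairing identity $\langle u\sh v, G\rangle=\langle u\otimes v,\Delta(G)\rangle$ you isolate is exactly the content of the clause ``The dual of this Hopf algebra is the algebra $\widetilde{\cU}$'' in Section~\ref{sec:prep}.
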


\begin{lem}\label{lem:grouplike_reciprocal}
If a $\tcU$-valued function
\begin{equation*}
F(z)=\sum_{w \in S}f(w;z)W
\end{equation*}
is grouplike, holomorphic at $z=0$ and $F(0)=\bunit$, the reciprocal of $F(z)$ is written as
\begin{equation}
\left(F(z)\right)^{-1}=\sum_{w \in S}f(w;z)\rho(W)=\sum_{w \in S}f(\rho^*(w);z)W.
\end{equation}
\end{lem}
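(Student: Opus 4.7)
The plan is to deduce the formula from the general Hopf-algebra fact that the antipode inverts grouplike elements, and then to translate the action of $\rho$ on the non-commutative monomials $W$ into the dual action of $\rho^{*}$ on the words $w$.

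First, since $F(z)$ is assumed grouplike in $\tcU$, we have $\Delta(F(z))=F(z)\otimes F(z)$ and $\varepsilon(F(z))=1$. Applying the defining Hopf-algebra identity $m\circ(\rho\otimes\mathrm{id})\circ\Delta=\eta\circ\varepsilon$ to $F(z)$ gives $\rho(F(z))\cdot F(z)=\bunit$, and symmetrically $F(z)\cdot \rho(F(z))=\bunit$. Hence $(F(z))^{-1}=\rho(F(z))$. Because $\rho$ acts only on the variables $X_0,X_1$ and $f(w;z)$ is a scalar coefficient, continuity of $\rho$ on $\tcU$ with respect to the grading lets us exchange sum and antipode:
\begin{equation*}
(F(z))^{-1}=\rho(F(z))=\sum_{w\in S}f(w;z)\,\rho(W).
\end{equation*}
This gives the first of the two claimed expressions.

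For the second equality, I would compare coefficients of an arbitrary word $V$ on both sides. Since $\rho(X_{i_1}\cdots X_{i_r})=(-1)^{r}X_{i_r}\cdots X_{i_1}$ and $\rho^{*}(\xi_{i_1}\cdots\xi_{i_r})=(-1)^{r}\xi_{i_r}\cdots\xi_{i_1}$, the pairing $\langle\,,\rangle$ between $S$ and $\tcU$ satisfies $\langle v,\rho(W)\rangle=\langle \rho^{*}(v),W\rangle$. Fixing a word $v$ of length $s$ and writing $v^{\mathrm{rev}}$ for its reversal, the coefficient of $V$ in $\sum_{w}f(w;z)\rho(W)$ picks up only $w=v^{\mathrm{rev}}$ and equals $(-1)^{s}f(v^{\mathrm{rev}};z)$, while the coefficient of $V$ in $\sum_{w}f(\rho^{*}(w);z)W$ equals $f(\rho^{*}(v);z)=(-1)^{s}f(v^{\mathrm{rev}};z)$ by linearity of $f(\cdot;z)$ in its first argument. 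The two expressions therefore coincide term by term.

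There is no real obstacle here; the one point requiring care is verifying that the antipode can be commuted with the (formal) infinite sum defining $F(z)$, which is justified because $\rho$ preserves the grading of $\tcU$ and the series is summable in the product topology. The content of the lemma is simply the translation of the intrinsic identity $F^{-1}=\rho(F)$ for grouplike elements into the coefficient language used throughout the paper, where either the dual basis $W$ is transported by $\rho$ or the word $w$ is transported by $\rho^{*}$.
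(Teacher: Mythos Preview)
Your argument is correct. The paper itself does not supply a proof of this lemma; it merely introduces it together with Lemma~\ref{lem:grouplike_shufflehom} as ``basic'' facts to be used later. Your proof is exactly the standard justification one would expect: the Hopf-algebra identity $m\circ(\rho\otimes\mathrm{id})\circ\Delta=\eta\circ\varepsilon$ applied to a grouplike element yields $F^{-1}=\rho(F)$, and the passage from $\rho(W)$ to $\rho^{*}(w)$ is a straightforward reindexing using the explicit formulas $\rho(X_{i_1}\cdots X_{i_r})=(-1)^{r}X_{i_r}\cdots X_{i_1}$ and $\rho^{*}(\xi_{i_1}\cdots\xi_{i_r})=(-1)^{r}\xi_{i_r}\cdots\xi_{i_1}$ given in Section~\ref{sec:prep}. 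The remark about $\rho$ preserving the grading, so that it extends continuously to $\tcU$ and commutes with the formal sum, is the only technical point and you have addressed it.
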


\vspace{1\baselineskip}

We denote by $S^0$ and $S^{10}$ the subalgebras of $S$ defined as
\begin{align*}
S^0&=\C\bnull+S \xi_1,\\
S^{10}&=\C\bnull+\xi_0 S \xi_1.
\end{align*}

$S$ has polynomial ring structures as follows:
\begin{prop}[\cite{R}]\label{prop:reutanuer}
$S$ is a polynomial algebra of $\xi_0$ whose coefficients are in $S^0$, and is a polynomial algebra 
of $\xi_0, \xi_1$ whose coefficients are in $S^{10}$;
\begin{align}\label{poly_rep}
S=S^0[\xi_0]=S^{10}[\xi_0,\xi_1].
\end{align}
That is, any word $w$ in $S$ can be written as
\begin{equation}\label{sh_decomposition}
w=\sum_{j}w_j \sh \xi_0^j = \sum_{i,j}\xi_1^i \sh w_{ij} \sh \xi_0^j
\end{equation}
uniquely, where $w_j \in S^1$ and $w_{ij} \in S^{10}$.
\end{prop}

We define the regularizing maps $\reg^0$ and $\reg^{10}$ as picking up the constant terms of a word 
with respect to the decomposition \eqref{sh_decomposition}:
\begin{align*}
& \reg^0: S=S^0[\xi_0] \to S^0, \quad u=\sum w_j\sh \xi_0^j \mapsto w_0 \ (w_j \in S^0), \\
& \reg^{10}: S=S^{10}[\xi_0,\xi_1] \to S^{10}, \quad 
u=\sum \xi_1^i\sh w_{ij}\sh \xi_0^j \mapsto w_{00} \ (w_{ij} \in S^{10}).
\end{align*}

The maps $\reg^0$ and $\reg^{10}$ are shuffle homomorphisms and are calculated by the following lemma. 
This lemma was implicitly shown in \cite{IKZ}. 

\begin{lem}[\cite{IKZ}]\label{lem:reg_calc}
\begin{enumerate}
\item For a word $u \in S^0$, we have
\begin{align}
u\xi_0^n&=\sum_{j=0}^n \reg^0(u \xi_0^{n-j})\sh \xi_0^j, \label{lem:reg_calc1}\\
\reg^0(u \xi_0^n)&=\sum_{j=0}^n (-1)^j(u\xi_0^{n-j})\sh \xi_0^j. \label{lem:reg_calc2}
\end{align}

\item For a word $u \in S^{10}$, we have
\begin{align}
\xi_1^m u\xi_0^n&=\sum_{i=0}^m\sum_{j=0}^n \xi_1^i\sh\reg^{10}(\xi_1^{m-i}u \xi_0^{n-j})\sh \xi_0^j, \label{lem:reg_calc3}\\
\reg^{10}(\xi_1^m u \xi_0^n)&=\sum_{i=0}^m \sum_{j=0}^n (-1)^{i+j}\xi_1^i\sh(\xi_1^{m-i}u\xi_0^{n-j})\sh \xi_0^j. \label{lem:reg_calc4}
\end{align}
\end{enumerate}
\end{lem}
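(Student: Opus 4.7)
The plan is to reduce each of the two pairs of identities to a single identity via a binomial inversion in the shuffle algebra, and then prove the remaining identity by induction, using the uniqueness of the polynomial decomposition in Proposition~\ref{prop:reutanuer}.

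First, I would show that (1.1) and (1.2) are mutually inverse. Substituting (1.1) for $u\xi_0^{n-j}$ into the right-hand side of (1.2) and re-indexing by $m=j+k$ gives
\begin{equation*}
\sum_{j=0}^n(-1)^j(u\xi_0^{n-j})\sh\xi_0^j
\;=\; \sum_{m=0}^n \reg^0(u\xi_0^{n-m})\sh\sum_{j=0}^m(-1)^j\xi_0^{m-j}\sh\xi_0^j,
\end{equation*}
and the inner sum equals $\sum_{j=0}^m(-1)^j\binom{m}{j}\xi_0^m=\delta_{m,0}\bnull$ since $\xi_0^{m-j}\sh\xi_0^j=\binom{m}{j}\xi_0^m$. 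Hence (1.1) $\Rightarrow$ (1.2), and the reverse implication is the same calculation run backwards. The analogous binomial inversion, performed separately in the $\xi_0$ and $\xi_1$ directions, reduces (1.3) $\Leftrightarrow$ (1.4) in part~(ii), so it suffices to prove (1.1) and (1.3).

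For (1.1), I would induct on $n$. The base $n=0$ is $u=\reg^0(u)$, which is immediate from $u\in S^0$. For the inductive step, counting the $|u|+n$ insertion slots of $\xi_0$ into $u\xi_0^{n-1}$---the $n$ slots lying at the boundary with or inside the trailing $\xi_0$-block each produce $u\xi_0^n$, while the $|u|$ slots strictly inside $u$ produce $(u\sh\xi_0-u\xi_0)\,\xi_0^{n-1}$---gives the shuffle identity
\begin{equation*}
u\xi_0^{n-1}\sh\xi_0\;=\;(n-1)\,u\xi_0^n+(u\sh\xi_0)\,\xi_0^{n-1}\qquad(n\ge 1).
\end{equation*}
Setting $\tilde u:=u\sh\xi_0-u\xi_0$, which lies in $S^0$ since every summand still ends in the preserved final letter $\xi_1$ of $u$, this rearranges to $n\cdot u\xi_0^n=u\xi_0^{n-1}\sh\xi_0-\tilde u\,\xi_0^{n-1}$. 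Applying the shuffle homomorphism $\reg^0$ to both sides and using $\reg^0(\xi_0)=0$ yields the recursion $n\,\reg^0(u\xi_0^n)=-\reg^0(\tilde u\,\xi_0^{n-1})$. Substituting the inductive hypothesis for $u\xi_0^{n-1}$ and $\tilde u\,\xi_0^{n-1}$ into the right-hand side, expanding $\xi_0^j\sh\xi_0=(j+1)\,\xi_0^{j+1}$, and matching $\xi_0^k$-coefficients against the desired expansion $n\sum_{k=0}^n\reg^0(u\xi_0^{n-k})\sh\xi_0^k$ (valid by uniqueness in Proposition~\ref{prop:reutanuer}) then yields (1.1) at $n$; the recursion above supplies precisely the algebraic relation needed to close the comparison.

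Part~(ii) is handled by iterating the one-variable argument at both ends of $\xi_1^m u\xi_0^n$. Applying (1.1) first on the right in $\xi_0$ (noting $\xi_1^m u\in S^0$, since $u\in S^{10}$) writes $\xi_1^m u\xi_0^n=\sum_j\reg^0(\xi_1^m u\xi_0^{n-j})\sh\xi_0^j$; each $\reg^0$-value lies in $S^0$, which itself has a polynomial decomposition over $S^{10}$ in $\xi_1$, and the analogous left argument (whose constant term is $\reg^{10}$) expands these values further. The two steps commute because $\reg^{10}$ is a shuffle homomorphism annihilating both $\xi_0$ and $\xi_1$, and the insertion-slot combinatorics at the two ends are independent; the bigraded uniqueness in $S=S^{10}[\xi_0,\xi_1]$ then assembles the resulting formula into (1.3). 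I expect the main obstacle to be the term-by-term coefficient comparison in the inductive step of part~(i): although the binomial identities collapse cleanly, tracking the $(j+1)$-factors from $\xi_0^j\sh\xi_0$ and the index shifts introduced by $\tilde u$ requires careful bookkeeping. Once the slot-counting shuffle identity and the recursion are in hand, the rest is routine.
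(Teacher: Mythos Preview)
The paper does not prove this lemma; it is stated with a citation to \cite{IKZ} and the remark that it ``was implicitly shown'' there. So there is no proof in the paper to compare against, and your proposal supplies an argument the paper omits.

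Your treatment of part~(i) is correct. The binomial inversion reducing \eqref{lem:reg_calc1}$\Leftrightarrow$\eqref{lem:reg_calc2} via $\xi_0^{a}\sh\xi_0^{b}=\binom{a+b}{a}\xi_0^{a+b}$ is valid, and the slot-counting identity $u\xi_0^{n-1}\sh\xi_0=(n-1)\,u\xi_0^n+(u\sh\xi_0)\,\xi_0^{n-1}$ together with the recursion $n\,\reg^0(u\xi_0^n)=-\reg^0(\tilde u\,\xi_0^{n-1})$ does close the induction: substituting and using $\xi_0^j\sh\xi_0=(j{+}1)\xi_0^{j+1}$ gives
\[
n\,u\xi_0^n=\sum_{k=1}^n k\,\reg^0(u\xi_0^{n-k})\sh\xi_0^k+\sum_{k=0}^{n-1}(n{-}k)\,\reg^0(u\xi_0^{n-k})\sh\xi_0^k
=n\sum_{k=0}^n\reg^0(u\xi_0^{n-k})\sh\xi_0^k,
\]
where the recursion (valid at every level $m\le n$, not only the top one) supplies the second sum.

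For part~(ii) your strategy is right but the execution needs one adjustment. After applying \eqref{lem:reg_calc1} on the right you obtain $\reg^0(\xi_1^m u\xi_0^{n-j})\in S^0$, and this element is \emph{not} of the form $\xi_1^{m'}v$ with $v\in S^{10}$, so the left analogue of \eqref{lem:reg_calc1} does not apply to it directly. The fix is to aim at \eqref{lem:reg_calc4} rather than \eqref{lem:reg_calc3}: introduce the left regularization $\reg^{\rm L}:S=(\C\bnull+\xi_0S)[\xi_1]\to\C\bnull+\xi_0S$, prove the mirror of \eqref{lem:reg_calc2} for it (same argument, or apply word reversal), and note that $\reg^{10}=\reg^{\rm L}\circ\reg^0$ as shuffle homomorphisms. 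Then
\[
\reg^{10}(\xi_1^m u\xi_0^n)=\reg^{\rm L}\Big(\sum_j(-1)^j(\xi_1^m u\xi_0^{n-j})\sh\xi_0^j\Big)
=\sum_j(-1)^j\,\reg^{\rm L}(\xi_1^m u\xi_0^{n-j})\sh\xi_0^j,
\]
using that $\reg^{\rm L}$ is a shuffle homomorphism fixing $\xi_0^j$; since $u\xi_0^{n-j}\in\C\bnull+\xi_0S$, the mirror of \eqref{lem:reg_calc2} now applies termwise to give \eqref{lem:reg_calc4}, and \eqref{lem:reg_calc3} follows by the double binomial inversion you already described.
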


\vspace{1\baselineskip}

Let $\D_0$ and $\D_1$ be domains on $\C$ defined by
\begin{align}
\D_0&=\C\setminus \{z=x\;|\; x\ge 1\},\\
\D_1&=\C\setminus \{z=x\;|\; x\le 0\}.
\end{align}

For a word $w=\xi_{i_1}\cdots \xi_{i_r} \in S^0$ (that is, $i_r=1$), we define an iterated integral
\begin{equation}\label{eq:iterated_integral}
\int_0^z w=\begin{cases}
1&(w=\bnull),\\
\ds \int_0^z \xi_{i_1}\left(\ds \int_0^z \xi_{i_2}\cdots \xi_{i_r}\right) &(\text{otherwise})
\end{cases}
\end{equation}
recursively and extend it to $S^0$ as a linear map.\\

For $z \in \D_0$ and $w=\xi_{i_1}\cdots\xi_{i_r} \in S^0$, we define the multiple polylogarithms 
of one variable $\Li(w;z)$ as 
\begin{equation}\label{eq:MPLdef}
\Li(w;z)=\int_0^z w=\int_0^z \xi_{i_1}\cdots\xi_{i_r}.
\end{equation}
These are holomorphic functions on $\D_0$ and have Taylor expansions
\begin{align}\label{MPL_Taylor}
\Li(\xi_0^{k_1-1}\xi_1\cdots\xi_0^{k_r-1}\xi_1;z)
=\sum_{n_1>n_2\cdots>n_r>0}\frac{z^{n_1}}{n_1^{k_1}\cdots n_r^{k_r}} \quad (|z|<1),
\end{align}
which coincides with $\Li_{k_1,\dots,k_r}(z)$.
In what follows, we consider the multiple polylogarithms only of one variable, so we omit ``of one variable.''

We extend $\Li(w;z)$ to $S$ as follows: For a word $w = \sum_j w_j\sh \xi_0^j$ ($w_j \in S^0$), 
we set an extended multiple polylogarithm by
\begin{equation}\label{eq:extendedMPL}
     \Li(w;z)=\sum_j\Li(w_j;z)\frac{\log^j z}{j!}
\end{equation}
Here $\log z$ is defined as the principal value on $\D_1$. 
These extended multiple polylogarithms are holomorphic on $\D_0\cap \D_1$, and the map 
\begin{equation}
\Li(\bullet;z) \,:\, S \longrightarrow \C, \quad w \longmapsto \Li(w;z) \quad (z \in \D_0\cap \D_1)
\end{equation}
is a shuffle homomorphism.

\begin{lem}[\cite{HPH}\cite{Ok}]\label{lem:MPL_diff}
The extended multiple polylogarithms satisfy the following recursive differential relations:
\begin{equation}\label{eq:MPL_diff}
\frac{d}{dz}\Li(\xi_0u;z)=\frac{\Li(u;z)}{z},\qquad \frac{d}{dz}\Li(\xi_1u;z)=\frac{\Li(u;z)}{1-z},
\end{equation}
where $u$ is a word of $S$.
\end{lem}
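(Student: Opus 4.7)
The plan is to reduce the general statement, by linearity and the polynomial representation $S=S^0[\xi_0]$ of Proposition \ref{prop:reutanuer}, to the case $u = v \sh \xi_0^n$ with $v \in S^0$ (possibly $v = \bnull$) and $n \ge 0$. The base case $n=0$ (i.e.\ $u \in S^0$) is essentially automatic: $\xi_0 u$ and $\xi_1 u$ lie in $S^0$ whenever $u \neq \bnull$, so the relations follow immediately from the iterated integral definition \eqref{eq:iterated_integral} and the fundamental theorem of calculus; the case $u = \bnull$ reduces to $\frac{d}{dz}\log z = 1/z$ and $\frac{d}{dz}(-\log(1-z)) = 1/(1-z)$ via the extension formula \eqref{eq:extendedMPL}. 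The substance of the lemma is therefore the interaction of left concatenation by $\xi_0$ or $\xi_1$ with shuffles by powers of $\xi_0$.

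The heart of the argument lies in two algebraic identities in $S$, which I would establish by induction on $n$ from the recursive definition of the shuffle product:
\begin{align*}
\xi_0 (v \sh \xi_0^n) &= \sum_{k=0}^{n} (-1)^k\, \xi_0^{k+1} v \sh \xi_0^{n-k}, \\
\xi_1 (v \sh \xi_0^n) &= \sum_{k=0}^{n} (-1)^k\, \xi_0^{k} \xi_1 v \sh \xi_0^{n-k}.
\end{align*}
The first comes from iterating the shuffle recursion $\xi_0 v \sh \xi_0^n = \xi_0(v \sh \xi_0^n) + \xi_0(\xi_0 v \sh \xi_0^{n-1})$; the second follows from the analogous recursion $\xi_1 v \sh \xi_0^n = \xi_1(v \sh \xi_0^n) + \xi_0(\xi_1 v \sh \xi_0^{n-1})$ combined with the first identity applied to the remaining summand.

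Applying the extended $\Li$, which is a shuffle homomorphism satisfying $\Li(\xi_0^j;z) = (\log z)^j/j!$, converts these into
\[
\Li(\xi_0(v \sh \xi_0^n); z) = \sum_{k=0}^n (-1)^k \Li(\xi_0^{k+1} v;z)\,\frac{(\log z)^{n-k}}{(n-k)!},
\]
and the analogue for $\xi_1$. For every $k$, the word $\xi_0^{k+1} v$ lies in $S^0$ when $v \neq \bnull$, and equals $\xi_0^{k+1}$ when $v = \bnull$; in both situations the base-case derivative formula $\frac{d}{dz}\Li(\xi_0^{k+1}v;z) = \Li(\xi_0^k v;z)/z$ holds. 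Differentiating each summand by the Leibniz rule produces two contributions; after reindexing $k \mapsto k+1$ in the piece coming from $(\log z)^{n-k}$, the two resulting sums cancel telescopingly, leaving only the $k=0$ main term $\Li(v;z)(\log z)^n/(n!z) = \Li(u;z)/z$. The $\xi_1$-case runs in parallel. The principal difficulty I anticipate is the book-keeping of this telescoping, since each summand splits into two Leibniz pieces whose indices must be matched across consecutive $k$; the degenerate case $v = \bnull$ must also be tracked carefully, because $\xi_0^{k+1}v$ then lies outside $S^0$ and its $\Li$-value must be read from the extension formula \eqref{eq:extendedMPL} rather than from an iterated integral.
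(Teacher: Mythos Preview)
The paper does not supply a proof of this lemma; it is quoted from the references \cite{HPH} and \cite{Ok}. So there is no ``paper's own proof'' to compare against, and the relevant question is whether your argument is sound.

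Your argument is correct. The reduction via $S=S^{0}[\xi_0]$ to $u=v\sh\xi_0^{n}$ with $v\in S^{0}$ is legitimate because concatenation on the left by $\xi_i$ is linear, and the two shuffle identities
\[
\xi_0(v\sh\xi_0^{n})=\sum_{k=0}^{n}(-1)^{k}\,\xi_0^{k+1}v\sh\xi_0^{n-k},\qquad
\xi_1(v\sh\xi_0^{n})=\sum_{k=0}^{n}(-1)^{k}\,\xi_0^{k}\xi_1 v\sh\xi_0^{n-k}
\]
do follow by the one--step recursions you indicate (they are the left--handed analogues of Lemma~\ref{lem:reg_calc}\,\eqref{lem:reg_calc2}). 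After applying the shuffle homomorphism $\Li(\bullet;z)$ and differentiating term by term with the Leibniz rule, the telescoping you describe goes through: in the $\xi_0$--case the $k\ge 1$ ``main'' terms cancel against the reindexed ``log--derivative'' terms, leaving $\Li(v;z)\,(\log z)^{n}/(n!\,z)=\Li(u;z)/z$; in the $\xi_1$--case the $k=0$ main term is the sole survivor and gives $\Li(u;z)/(1-z)$. The corner cases $v=\bnull$ are handled exactly as you note, since $\frac{d}{dz}\Li(\xi_0^{k+1};z)=\Li(\xi_0^{k};z)/z$ holds directly from $\Li(\xi_0^{m};z)=(\log z)^{m}/m!$. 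No gaps.
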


We observe that the extended multiple polylogarithms can be continued  onto 
$\bP^1\setminus\{0,1,\infty\}$ as many-valued analytic functions
along the integral paths of \eqref{eq:MPLdef}.\\

If $w=\xi_0^{k_1-1}\xi_1\cdots\xi_0^{k_r-1}\xi_1 \in S^{10}$, the limit of $\Li(w;z)$ as $z$ tends to 1 in $\D_0\cap \D_1$ 
converges and defines multiple zeta values:
\begin{equation}\label{eq:MZV}
\zeta(w)=\lim_{\substack{z \to 1\\z \in \D_0\cap \D_1}}\Li(w;z)=\sum_{n_1>n_2\cdots>n_r>0}\frac{1}{n_1^{k_1}\cdots n_r^{k_r}}.
\end{equation}
The multiple zeta values $\zeta(w)$ are denoted by $\zeta(k_1,\ldots,k_r)$ as usual.\\

Under these notations, the specific solution to the equation \eqref{eq:1KZ} 
can be written as follows:
\begin{prop}[\cite{OiU1}, \cite{OkU}]\label{prop:KZE1}
The KZ equation of one variable \eqref{eq:1KZ} has the solution $\cL^{(0)}(z)$ which satisfies the asymptotic condition
\begin{equation*}
\cL^{(0)}(z)=\hcL^{(0)}(z)z^{X_0},
\end{equation*}
where  $\hcL^{(0)}(z)$ is holomorphic at $z=0$ and $\hcL^{(0)}(0)=\bunit$.

The solution $\cL^{(0)}(z)$ is uniquely characterized by this condition and is a grouplike element of $\tcU$.

Furthermore the solution $\cL^{(0)}(z)$ is expressed as
\begin{align}
\cL^{(0)}(z)&=\sum_{w \in S}\Li(w;z)W \label{eq:fundamental0}\\
&=\Big(\sum_{w \in S}\Li(\reg^{0}(w);z)W \Big)z^{X_0} \label{eq:fundamental0_0}\\
&=(1-z)^{-X_1}\Big(\sum_{w \in S}\Li(\reg^{10}(w);z)W \Big)z^{X_0}. \label{eq:fundamental0_00}
\end{align}
\end{prop}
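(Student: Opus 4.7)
The plan is to construct $\cL^{(0)}(z)$ explicitly as the generating series of extended multiple polylogarithms, verify by term-by-term differentiation that it satisfies \eqref{eq:1KZ}, and then deduce the three representations and uniqueness from combinatorial identities in $S$ together with a standard linear-ODE argument.

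First, I would set $\cL^{(0)}(z):=\sum_{w\in S}\Li(w;z)\,W$ in $\tcU$, using the extended multiple polylogarithms from \eqref{eq:extendedMPL}. Splitting the sum by the first letter of $w$ gives
\[
\cL^{(0)}(z)=\bunit+X_0\sum_{u\in S}\Li(\xi_0u;z)\,U+X_1\sum_{u\in S}\Li(\xi_1u;z)\,U,
\]
and Lemma \ref{lem:MPL_diff} turns term-by-term differentiation into $\frac{d}{dz}\cL^{(0)}(z)=\bigl(\frac{X_0}{z}+\frac{X_1}{1-z}\bigr)\cL^{(0)}(z)$, establishing \eqref{eq:fundamental0}. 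Since $\Li(\bullet;z):S\to\C$ is a shuffle homomorphism on $\D_0\cap\D_1$ (noted after \eqref{eq:extendedMPL}), Lemma \ref{lem:grouplike_shufflehom} shows that $\cL^{(0)}(z)$ is grouplike.

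Next, I would derive \eqref{eq:fundamental0_0} by a coefficient comparison. Expanding $z^{X_0}=\sum_{k\ge 0}\frac{(\log z)^k}{k!}X_0^k$ yields
\[
\Bigl(\sum_{w'\in S}\Li(\reg^0(w');z)\,W'\Bigr)z^{X_0}=\sum_{w'\in S,\,k\ge 0}\Li(\reg^0(w');z)\frac{(\log z)^k}{k!}\,W'X_0^k.
\]
For a fixed $W=VX_0^n$ with $V$ either $\bunit$ or ending in $X_1$, the coefficient on the right is $\sum_{k=0}^{n}\Li(\reg^0(v\xi_0^{n-k});z)\frac{(\log z)^k}{k!}$, which by \eqref{lem:reg_calc1} coincides with $\Li(v\xi_0^n;z)$ in the sense of \eqref{eq:extendedMPL}, matching the coefficient of $W$ in $\cL^{(0)}(z)$. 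Setting $\hcL^{(0)}(z):=\sum_{w}\Li(\reg^0(w);z)\,W$, each summand has $\reg^0(w)\in S^0$ and is therefore holomorphic at $z=0$ by \eqref{MPL_Taylor}, vanishing unless $w=\bnull$; hence $\hcL^{(0)}(0)=\bunit$.

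For uniqueness, any second solution $F(z)=\hF(z)z^{X_0}$ satisfying the same asymptotic condition must differ from $\cL^{(0)}(z)$ by right-multiplication by a constant $C\in\tcU$ (standard for linear ODEs), so that $\hF(z)^{-1}\hcL^{(0)}(z)=z^{X_0}Cz^{-X_0}=\sum_{n\ge 0}\frac{(\log z)^n}{n!}\mathrm{ad}_{X_0}^n(C)$; holomorphy of the left-hand side at $z=0$ forces $\mathrm{ad}_{X_0}^n(C)=0$ for every $n\ge 1$, and evaluating at $z=0$ then yields $C=\bunit$. The representation \eqref{eq:fundamental0_00} follows by the analogous coefficient-matching argument using $S=S^{10}[\xi_0,\xi_1]$, Lemma \ref{lem:reg_calc}(ii), and the identity $(1-z)^{-X_1}=\exp\bigl(-X_1\log(1-z)\bigr)$. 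The main obstacle is the double-sum bookkeeping in these coefficient comparisons, where capitalizations on the $\tcU$-side must be paired with shuffle decompositions on the $S$-side; Lemma \ref{lem:reg_calc} is tailored exactly for this, so the task reduces to careful re-indexing, slightly more delicate in \eqref{eq:fundamental0_00} because both the leading $\xi_1$-block and the trailing $\xi_0$-block of each word must be peeled off simultaneously.
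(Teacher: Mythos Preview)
The paper does not supply its own proof of this proposition: it is quoted with attribution to \cite{OiU1} and \cite{OkU}, and the surrounding text moves directly on to defining $\cL^{(1)}(z)$. There is thus no in-paper argument to compare against.

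Your proposal is correct and is essentially the argument one finds in those references: verify \eqref{eq:1KZ} term by term via Lemma~\ref{lem:MPL_diff}, obtain the grouplike property from Lemma~\ref{lem:grouplike_shufflehom} together with the shuffle-homomorphism property of $\Li(\bullet;z)$, and derive the factorizations \eqref{eq:fundamental0_0} and \eqref{eq:fundamental0_00} by coefficient comparison using Lemma~\ref{lem:reg_calc}. The uniqueness step is also the standard one; the only point worth making explicit is that $\mathrm{ad}_{X_0}^n(C)=0$ for all $n\ge 1$ yields merely $C\in\C[[X_0]]$, so that $z^{X_0}Cz^{-X_0}$ collapses to the constant $C$, and it is then the evaluation at $z=0$ (using $\hF(0)=\hcL^{(0)}(0)=\bunit$) that forces $C=\bunit$. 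You state this conclusion, but the intermediate collapse deserves a word.
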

We call the solution $\cL^{(0)}(z)$ the fundamental solution of \eqref{eq:1KZ} normalized at $z=0$. We also refer to the solution 
\begin{equation}
\cL^{(1)}(z)=\hcL^{(1)}(z)(1-z)^{-X_1}, 
\end{equation}
where $\hcL^{(1)}(z)$ is holomorphic at $z=1$, $\hcL^{(1)}(1)=\bunit$, 
as the fundamental solution normalized at $z=1$.\\

With respect to the transformation $t: z \mapsto 1-z$, we introduce the automorphism $t^*$ on $S$ by
\begin{equation}
t^*(\xi_0)=-\xi_1,\qquad t^*(\xi_1)=-\xi_0,
\end{equation}
which is the pull back induced from $t$, and also introduce the automorphism $t_*$ on $\cU$ by
\begin{equation}
t_*(X_0)=-X_1,\qquad t_*(X_1)=-X_0,
\end{equation} 
which is the dual map of $t^*$. Let $\tau : S \to S$ be an anti-automorphism defined by $\tau=t^*\circ \rho^*$, that is, 
\begin{equation}
\tau(\xi_0)=\xi_1,\qquad \tau(\xi_1)=\xi_0.
\end{equation}
Furthermore put $T=t_*\circ \rho$ which is an anti-automorphism on $\cU$ satisfying 
\begin{equation}
           T(X_0)=X_1, \qquad T(X_1)=X_0. 
\end{equation}

Using the transformation $t$ and the automorphism $t_*$ , 
the fundamental solution $\cL^{(1)}$ of the KZ equation \eqref{eq:1KZ} 
normalized at $z=1$ is written as
\begin{align}
\cL^{(1)}(z)
&=\sum_{w \in S}\Li(w;1-z)t_*(W) \label{eq:fundamental1}\\
&=\Big(\sum_{w \in S}\Li(\reg^{0}(w);1-z)t_*(W)\Big)(1-z)^{-X_1} \label{eq:fundamental1_0}\\
&=z^{X_0}\Big(\sum_{w \in S}\Li(\reg^{10}(w);1-z)t_*(W)\Big)(1-z)^{-X_1}. \label{eq:fundamental1_00}
\end{align}
The connection relation between $\cL^{(0)}$ and $\cL^{(1)}$ is described as follows:

\begin{prop}[\cite{D}\cite{OkU}]\label{prop:connection1}
\begin{enumerate}
\item The connection matrix between $\cL^{(0)}(z)$ and $\cL^{(1)}(z)$ is given by the Drinfel'd associator
\begin{equation}\label{eq:DA}
\varPhi_{\rm KZ}=\varPhi_{\rm KZ}(X_0,X_1)=\sum_{w\in S}\zeta(\reg^{10}(w))W.
\end{equation}
That is, the connection formula reads 
\begin{equation}\label{eq:connection01'}
\cL^{(0)}(z)=\cL^{(1)}(z)\varPhi_{\rm KZ}.
\end{equation}
\item The connection formula \eqref{eq:connection01'} is equivalent to the system of relations
\begin{align}\label{eq:gif_}
\sum_{uv=w}\Li(\tau(u);1-z)\Li(v;z)=\zeta(\reg^{10}(w))
\end{align}
for all words $w \in S$.
\end{enumerate}
\end{prop}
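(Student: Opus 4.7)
The plan is to prove part (i) by asymptotic matching at $z=1$ and then deduce (ii) by a coefficient-wise expansion using Lemmas \ref{lem:grouplike_shufflehom} and \ref{lem:grouplike_reciprocal}. Since $\cL^{(0)}(z)$ and $\cL^{(1)}(z)$ are both grouplike fundamental solutions of \eqref{eq:1KZ}, there exists a unique constant grouplike $\Phi \in \tcU$ with $\cL^{(0)}(z) = \cL^{(1)}(z)\Phi$, and the task is to identify $\Phi = \varPhi_{\rm KZ}$. To do this, I would use the representation \eqref{eq:fundamental0_00},
\[
\cL^{(0)}(z) = (1-z)^{-X_1}\, H(z)\, z^{X_0}, \qquad H(z) := \sum_{w \in S}\Li(\reg^{10}(w);z)W,
\]
and observe that $H(z)$ extends continuously to $z=1$ with $H(1) = \varPhi_{\rm KZ}$, since each $\reg^{10}(w) \in S^{10}$ and $\Li(\reg^{10}(w);1) = \zeta(\reg^{10}(w))$. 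Then
\[
\Phi = \cL^{(1)}(z)^{-1}\cL^{(0)}(z) = (1-z)^{X_1}\hcL^{(1)}(z)^{-1}(1-z)^{-X_1}\cdot H(z)\,z^{X_0},
\]
and I would let $z \to 1$: the rightmost factor tends to $\varPhi_{\rm KZ}$; writing $\hcL^{(1)}(z)^{-1} = \bunit + R(z)$ with $R(z) = O(1-z)$, the conjugation $(1-z)^{X_1} R(z) (1-z)^{-X_1}$ decomposes in each graded component of $\tcU$ into finitely many terms of the form $\log^k(1-z)\cdot O(1-z)$, all of which vanish. Hence $\Phi = \varPhi_{\rm KZ}$.

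For part (ii), the expansion \eqref{eq:fundamental1} combined with $t_*(X_{i_1}\cdots X_{i_r}) = (-1)^r X_{1-i_1}\cdots X_{1-i_r}$ reindexes as $\cL^{(1)}(z) = \sum_{w \in S}\Li(t^*(w);1-z)W$, whose coefficients form a shuffle homomorphism of $w$ so that Lemma \ref{lem:grouplike_shufflehom} confirms grouplikeness. Then Lemma \ref{lem:grouplike_reciprocal} together with the identity $\tau = \rho^* \circ t^* = t^* \circ \rho^*$ yields
\[
\cL^{(1)}(z)^{-1} = \sum_{w \in S}\Li(\tau(w);1-z)W.
\]
Multiplying this against $\cL^{(0)}(z) = \sum_{v \in S}\Li(v;z)V$ and extracting the coefficient of a word $W$ converts the constant identity $\cL^{(1)}(z)^{-1}\cL^{(0)}(z) = \varPhi_{\rm KZ}$ into exactly \eqref{eq:gif_} for every $w$; the steps reverse, giving the claimed equivalence.

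The step I expect to be the main obstacle is the asymptotic matching in (i): because $X_0$ and $X_1$ do not commute, one must argue carefully that conjugation by $(1-z)^{X_1}$ preserves the $O(1-z)$ vanishing of $\hcL^{(1)}(z)^{-1} - \bunit$ in the topology of $\tcU$. The saving grace is the grading: each graded component of $\tcU$ involves only finitely many adjoint actions $\operatorname{ad}(X_1)^k$, so every singular factor $\log^k(1-z)$ is dominated by the remainder $O(1-z)$. Once this degree-by-degree convergence is in place, part (ii) reduces to formal bookkeeping via the Hopf-algebraic lemmas of Section~\ref{sec:prep}.
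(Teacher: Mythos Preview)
Your proposal is correct and follows essentially the same route as the paper's proof, which is only a one-line sketch: ``This proposition follows from the representation \eqref{eq:fundamental0_00}, \eqref{eq:fundamental1_00}, Lemma \ref{lem:grouplike_reciprocal}, and the definition \eqref{eq:MZV} of the multiple zeta values.'' Your argument unpacks precisely these ingredients: for (i) you use the $\reg^{10}$-factorization \eqref{eq:fundamental0_00} of $\cL^{(0)}$ to see that the middle factor $H(z)$ converges to $\varPhi_{\rm KZ}$ at $z=1$, and you correctly handle the one point the paper leaves implicit---that the conjugation $(1-z)^{X_1}(\hcL^{(1)}(z))^{-1}(1-z)^{-X_1}$ tends to $\bunit$ degree-by-degree; for (ii) your reindexing of \eqref{eq:fundamental1} to obtain $\big(\cL^{(1)}(z)\big)^{-1}=\sum_{w}\Li(\tau(w);1-z)\,W$ via Lemma~\ref{lem:grouplike_reciprocal} and $\tau=t^*\circ\rho^*$ is exactly the intended computation, and the coefficient extraction gives the equivalence with \eqref{eq:gif_}.
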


We call the relations \eqref{eq:gif_} the generalized inversion formulas for extended multiple polylogarithms.\\

This proposition follows from the representation \eqref{eq:fundamental0_00}, \eqref{eq:fundamental1_00}, 
Lemma \ref{lem:grouplike_reciprocal}, and the definition \eqref{eq:MZV} of the multiple zeta values. 

\section{The generalized inversion formulas for the extended multiple polylogarithms}

According to Proposition \ref{prop:connection1}, the generalized inversion formulas \eqref{eq:gif_} 
is equivalent to the connection problem of the KZ equation of one variable. 
However we can show these formulas independently of the connection problem of the KZ equation.\\


\begin{prop}\label{prop:gif}
For any word $w \in S$, the generalized inversion formula
\begin{align}\label{eq:gif}
\sum_{uv=w}\Li(\tau(u);1-z)\Li(v;z)=\zeta(\reg^{10}(w))
\end{align}
holds.
\end{prop}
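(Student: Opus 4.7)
My plan is to lift the scalar identity \eqref{eq:gif} to a single non-commutative identity in $\tcU$, namely to realise its left-hand side as a $\tcU$-valued function $F(z)$, to show that $F(z)$ is constant in $z$ thanks to the KZ equation, and finally to identify the constant value with the Drinfel'd associator by an asymptotic analysis at $z=1$. Crucially, this route uses only the fact that both $\cL^{(0)}$ and $\cL^{(1)}$ are fundamental solutions of \eqref{eq:1KZ}, not the connection formula \eqref{eq:connection01'} itself.

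First I would set
\[
F(z)=\sum_{w \in S}\Bigl(\sum_{uv=w}\Li(\tau(u);1-z)\Li(v;z)\Bigr)W,
\]
so that $F(z)=A(z)\,\cL^{(0)}(z)$ with $A(z)=\sum_{w}\Li(\tau(w);1-z)W$. Using \eqref{eq:fundamental1}, the involutivity of $t^*$ and $t_*$, and Lemma \ref{lem:grouplike_reciprocal} applied to the grouplike series $\cL^{(1)}(z)=\sum_w\Li(t^*(w);1-z)W$, I obtain
\[
(\cL^{(1)}(z))^{-1}=\sum_w\Li\bigl(t^*(\rho^*(w));1-z\bigr)W=\sum_w\Li(\tau(w);1-z)W=A(z),
\]
because $\tau=t^*\circ\rho^*$. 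Hence $F(z)=(\cL^{(1)}(z))^{-1}\cL^{(0)}(z)$.

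Next I would prove the constancy of $F(z)$. Since $\cL^{(1)}(z)$ solves \eqref{eq:1KZ}, differentiating $\cL^{(1)}(z)(\cL^{(1)}(z))^{-1}=\bunit$ gives $\frac{d}{dz}(\cL^{(1)})^{-1}=-(\cL^{(1)})^{-1}\bigl(\frac{X_0}{z}+\frac{X_1}{1-z}\bigr)$, so the product rule together with the KZ equation for $\cL^{(0)}$ yields
\[
\frac{dF}{dz}=-(\cL^{(1)})^{-1}\Bigl(\frac{X_0}{z}+\frac{X_1}{1-z}\Bigr)\cL^{(0)}+(\cL^{(1)})^{-1}\Bigl(\frac{X_0}{z}+\frac{X_1}{1-z}\Bigr)\cL^{(0)}=0.
\]
Thus $F(z)$ is a constant element of $\tcU$.

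Finally, to identify this constant I would let $z \to 1$ within $\D_0\cap\D_1$. From \eqref{eq:fundamental0_00}, $\cL^{(0)}(z)\to(1-z)^{-X_1}\,\varPhi_{\rm KZ}$, because $\Li(\reg^{10}(w);z)\to\zeta(\reg^{10}(w))$ and $z^{X_0}\to\bunit$. From \eqref{eq:fundamental1_00}, $\cL^{(1)}(z)\to(1-z)^{-X_1}$, because $\Li(\reg^{10}(w);1-z)\to 0$ for every nonempty word while $z^{X_0}\to\bunit$. The singular factors $(1-z)^{\pm X_1}$ therefore cancel in $(\cL^{(1)})^{-1}\cL^{(0)}$, giving $F(z)\equiv\varPhi_{\rm KZ}=\sum_{w}\zeta(\reg^{10}(w))W$; extracting the coefficient of $W$ produces \eqref{eq:gif} for each word $w$. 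The principal obstacle is this last limit: the factors $(1-z)^{-X_1}$ are formal exponentials whose expansion contains $\log(1-z)$, so one has to verify grade by grade in $\tcU$ that the remaining holomorphic power-series factors admit genuine limits and that the cancellation $(1-z)^{X_1}(1-z)^{-X_1}=\bunit$ can be applied inside the non-commutative product at the limit.
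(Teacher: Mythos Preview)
Your argument is correct, but it takes a route the paper deliberately sets aside. Indeed, the paper states at the start of Section~3 that Proposition~\ref{prop:connection1} already shows \eqref{eq:gif_} is equivalent to the connection formula, and then announces: ``However we can show these formulas independently of the connection problem of the KZ equation.'' Your proof \emph{is} the connection problem: you identify $F(z)=(\cL^{(1)}(z))^{-1}\cL^{(0)}(z)$, prove it constant because both factors solve \eqref{eq:1KZ}, and then evaluate the constant as $\varPhi_{\rm KZ}$ by the asymptotics at $z=1$. That is exactly how Proposition~\ref{prop:connection1}(i) is obtained (``from the representation \eqref{eq:fundamental0_00}, \eqref{eq:fundamental1_00}, Lemma~\ref{lem:grouplike_reciprocal}, and the definition \eqref{eq:MZV}''), followed by reading off the coefficient of $W$ as in Proposition~\ref{prop:connection1}(ii).

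The paper instead works entirely at the scalar level via Lemma~\ref{lem:gen_inv_limit}: part~(i) shows $\frac{d}{dz}\sum_{uv=w}\Li(\tau(u);1-z)\Li(v;z)=0$ by a direct telescoping using only the differential recursions \eqref{eq:MPL_diff} (no KZ equation, no $\tcU$); part~(ii) computes the limit at $z=1$ \emph{and} at $z=0$ by an explicit decomposition using Lemma~\ref{lem:reg_calc}. What this buys is twofold. First, the two limits \eqref{eq:lim_gif_1} and \eqref{eq:lim_gif_0} are themselves the tools used later in the proof of Theorem~\ref{thm:MPL_RH} to pin down the integration constants, so the lemma earns its keep in Section~\ref{sec:MPL_RH}. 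Second, comparing the two limits immediately gives the duality relations (Proposition~\ref{prop:dual_mzv}), which your one-sided evaluation at $z=1$ does not. Conversely, your approach is more conceptual and avoids the somewhat intricate limit computation in the proof of \eqref{eq:lim_gif_1}; the ``obstacle'' you flag---that $(1-z)^{X_1}(\hcL^{(1)}(z))^{-1}(1-z)^{-X_1}\to\bunit$---is genuine but easily handled grade by grade once you write $(\hcL^{(1)}(z))^{-1}=\bunit+(1-z)Q(z)$ with $Q$ holomorphic, since each graded component of $(1-z)(1-z)^{X_1}Q(z)(1-z)^{-X_1}$ is $(1-z)$ times a polynomial in $\log(1-z)$.
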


To prove this, it is enough to show the following lemma. This lemma also plays a key role to prove 
Theorem \ref{thm:MPL_RH} in Section \ref{sec:MPL_RH}.

\begin{lem}\label{lem:gen_inv_limit}
\begin{enumerate}
\item\label{lem:gen_inv_limit_1} We have 
\begin{equation}
\frac{d}{dz}\left(\sum_{uv=w}\Li(\tau(u);1-z)\Li(v;z)\right)=0. 
\end{equation}
\item\label{lem:gen_inv_limit_2} For any word $w$ in $S$, we have
\begin{align}
\lim_{\substack{z \to 1,\\z \in \D_0\cap \D_1}}\sum_{uv=w}\Li(\tau(u);1-z)\Li(v;z)&=\zeta(\reg^{10}(w)), 
                                                                         \label{eq:lim_gif_1}\\
\lim_{\substack{z \to 0,\\z \in \D_0\cap \D_1}}\sum_{uv=w}\Li(\tau(u);1-z)\Li(v;z)&=\zeta(\reg^{10}(\tau(w)). 
                                                                         \label{eq:lim_gif_0}
\end{align}
\end{enumerate}
\end{lem}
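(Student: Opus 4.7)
For part (i), I would differentiate each summand $\Li(\tau(u_k);1-z)\Li(v_k;z)$ corresponding to a splitting $u_kv_k = w$ (with $w = \xi_{i_1}\cdots\xi_{i_r}$, $u_k = \xi_{i_1}\cdots\xi_{i_k}$, $v_k = \xi_{i_{k+1}}\cdots\xi_{i_r}$) using Lemma \ref{lem:MPL_diff}. Since $\tau$ reverses order and swaps indices, one obtains
\[
\frac{d}{dz}\Li(\tau(u_k);1-z) = -\gamma(i_k)\,\Li(\tau(u_{k-1});1-z),\qquad \frac{d}{dz}\Li(v_k;z) = \gamma(i_{k+1})\,\Li(v_{k+1};z),
\]
where $\gamma(0) = 1/z$ and $\gamma(1) = 1/(1-z)$. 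A re-indexing pairs the derivative at position $k$ in the first family with the derivative at position $k-1$ in the second family, so the whole sum telescopes to $0$.

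For part (ii), I would first show that $w \mapsto F_w(z) := \sum_{uv=w}\Li(\tau(u);1-z)\Li(v;z)$ is a shuffle homomorphism $S \to \C$ for each fixed $z$, combining three facts: $\Li(\cdot;z)$ and $\Li(\cdot;1-z)$ are shuffle homomorphisms; $\tau$ is a shuffle automorphism of $S$; and the deconcatenation coproduct $w \mapsto \sum_{uv=w} u \otimes v$ is a shuffle-algebra morphism on the shuffle Hopf algebra. A direct computation gives $F_{\xi_0}(z) = \log z + \Li(\xi_1;1-z) = \log z - \log z = 0$, and likewise $F_{\xi_1}(z) = 0$. Invoking the polynomial representation $S = S^{10}[\xi_0,\xi_1]$ of Proposition \ref{prop:reutanuer} together with shuffle-multiplicativity yields $F_w(z) = F_{\reg^{10}(w)}(z)$ for every word $w$, so both limits reduce to the case $w' \in S^{10}$.

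For $w' \in S^{10}\setminus\{\bnull\}$, I would compute the $z \to 1$ limit termwise. Since $w'$ begins with $\xi_0$, every non-empty prefix $u'$ also begins with $\xi_0$; hence $\tau(u') \in S^0\setminus\{\bnull\}$, whose convergent Taylor expansion at the origin gives $\Li(\tau(u');1-z) = O(1-z)$ as $z \to 1$, while by the factorization \eqref{eq:fundamental0_00} each $\Li(v';z)$ grows at most polynomially in $\log(1-z)$ near $z = 1$. Thus every term with $u' \ne \bnull$ vanishes in the limit, and only the $u' = \bnull$ term survives, giving $\lim_{z\to 1}F_{w'}(z) = \Li(w';1) = \zeta(w')$; combined with the reduction of the previous paragraph, this yields \eqref{eq:lim_gif_1}. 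The $z \to 0$ limit is symmetric with the roles of $u'$ and $v'$ exchanged: only the $v'=\bnull$ term survives, producing $\zeta(\tau(w'))$, and a trivial re-indexing of the polynomial representation shows $\reg^{10}\circ\tau = \tau\circ\reg^{10}$, upgrading the answer to $\zeta(\reg^{10}(\tau(w)))$ as in \eqref{eq:lim_gif_0}. The main technical hurdle is the boundary asymptotic comparison: via \eqref{eq:fundamental0_00} and \eqref{eq:fundamental1_00}, one must verify that the linear $(1-z)$-decay (respectively $z$-decay) of one factor dominates the polynomial-in-$\log$ growth of the other for every intermediate splitting.
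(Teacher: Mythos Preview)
Your argument is correct. Part (i) is the same telescoping computation the paper gives.

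For part (ii) your route genuinely differs from the paper's. The paper does not pass through the shuffle–homomorphism property of $F_w$. Instead it takes a general word $w=\xi_1^k w'\xi_0^l$ with $w'\in S^{10}$, splits the sum according to whether the prefix $u$ lies inside the initial block $\xi_1^k$ or already contains a letter $\xi_0$, and then applies Lemma~\ref{lem:reg_calc} explicitly; a binomial-type identity $\sum_{i+p=r}(-1)^p/(i!\,p!)=\delta_{r,0}$ kills all the $\log(1-z)$ powers and leaves exactly $\zeta(\reg^{10}(w))$. Your approach replaces this explicit computation by the Hopf-algebraic observation that the deconcatenation coproduct is a shuffle-algebra morphism, which together with $F_{\xi_0}=F_{\xi_1}=0$ gives $F_w=F_{\reg^{10}(w)}$ at once; the limit for $w\in S^{10}$ is then a clean termwise estimate. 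Your route is more conceptual and bypasses the combinatorics of Lemma~\ref{lem:reg_calc}; the paper's route is more elementary in that it never invokes the algebra–coalgebra compatibility axiom, at the price of a longer hands-on calculation. The asymptotic comparison you flag as the ``main technical hurdle'' is exactly what the paper also isolates (its phrase is ``$\Li(v;z)$ diverges at most of logarithmic order''), so you have identified the same analytic point.
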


\begin{proof}
\eqref{lem:gen_inv_limit_1}\ We represent the differential recursive relations $\eqref{eq:MPL_diff}$ 
in terms of the exterior derivative with respect to the variable $z$; 
\begin{align*}
d\Li(\xi_iw;z) = \xi_i \Li(w;z) \qquad (i=0,1).
\end{align*}
From this, it follows that
\begin{align*}
d\Li(\tau(\xi_i)w;1-z) = -\xi_i \Li(w;z) \qquad (i=0,1).
\end{align*}
Hence, for a word $w=\xi_{i_1} \cdots \xi_{i_r}$, we have 
\begin{align*}
&d\left(\sum_{uv=w}\Li(\tau(u);1-z)\Li(v;z)\right) \\
=\ &d\left(\sum_{k=0}^r\Li(\tau(\xi_{i_1}\cdots\xi_{i_k});1-z)\Li(\xi_{i_{k+1}}\cdots\xi_{i_r};z)\right) \\
=\ &\xi_{i_1}\Li(\xi_{i_2}\cdots\xi_{i_r};z) \\
&+\sum_{k=1}^{r-1}\Big(-\xi_{i_k}\Li(\tau(\xi_{i_1}\cdots\xi_{i_{k-1}});1-z) \Li(\xi_{i_{k+1}}\cdots\xi_{i_r};z)\\[-1ex]
&\hspace{4cm}+\xi_{i_{k+1}}\Li(\tau(\xi_{i_1}\cdots\xi_{i_k});1-z)\Li(\xi_{i_{k+2}}\cdots\xi_{i_r};z) \Big)\\
&-\xi_{i_r}\Li(\tau(\xi_{i_1}\cdots\xi_{i_{r-1}});1-z) \\
=\ &\xi_{i_1}\Li(\xi_{i_2}\cdots\xi_{i_r};z) -\xi_{i_1}\Li(\xi_{i_2}\cdots\xi_{i_r};z)
+ \xi_{i_r}\Li(\tau(\xi_{i_1}\cdots\xi_{i_{r-1}});1-z) \\
&-\xi_{i_r}\Li(\tau(\xi_{i_1}\cdots\xi_{i_{r-1}});1-z) \\
=\ &0. 
\end{align*}

\noindent
\eqref{lem:gen_inv_limit_2}\ For a word $w=\xi_0^r$ or $w=\xi_1^r$, the both sides of \eqref{eq:lim_gif_1} 
and \eqref{eq:lim_gif_0} are trivially zero. For a word $w=\xi_1^k w' \xi_0^l,\; w' \in S^{10}$, 
we will prove \eqref{eq:lim_gif_1}. One can similarly prove \eqref{eq:lim_gif_0}.

For $w=\xi_1^k w' \xi_0^l,\; w' \in S^{10}$, we have
\begin{align}
&\sum_{uv=w}\Li(\tau(u);1-z)\Li(v;z) \notag \\
=&\sum_{i=0}^k\Li(\tau(\xi_1^i);1-z)\Li(\xi_1^{k-i}w'\xi_0^l;z)
+\sum_{\substack{uv=w'\xi_0^l\\u\neq \bnull}}\Li(\tau(\xi_1^k u);1-z)\Li(v;z). \label{proof:gen_inv_limit:1}
\end{align}
For the second term of the right hand side of \eqref{proof:gen_inv_limit:1}, by putting $u=\xi_0u'$, we obtain
\begin{align*}
\Li(\tau(\xi_1^k u);1-z)\Li(v;z)&=\Li(\tau(\xi_1^k \xi_0 u');1-z)\Li(v;z)  \\
&=\Li(\tau(u')\xi_1\xi_0^k);1-z)\Li(v;z)\\
&=\sum_{s=0}^k \Li(\reg^0(\tau(u')\xi_1\xi_0^{k-s}));1-z)\frac{\log^s (1-z)}{s!}\Li(v;z). 
\end{align*}
Since $\reg^0(\tau(u')\xi_1\xi_0^{k-s})\neq \bnull$ so that 
\begin{equation*}
\Li(\reg^0(\tau(u')\xi_1\xi_0^{k-s}));1-z)=O(1-z) \quad (z \to 1),
\end{equation*}
and $\Li(v;z)$ diverges at most of logarithmic order as $z \to 1$, we have 
\begin{equation}
\Li(\tau(\xi_1^k u);1-z)\Li(v;z) \to 0 \quad (z \to 1). 
\end{equation}

Next, we consider the first term of the right hand side of \eqref{proof:gen_inv_limit:1}. 
By using Lemma \ref{lem:reg_calc}, we have
\begin{align}
&\sum_{i=0}^k\Li(\tau(\xi_1^i);1-z)\Li(\xi_1^{k-i}w'\xi_0^l;z) \notag \\
=&\sum_{i=0}^k\Li(\xi_0^i;1-z)\Li(\xi_1^{k-i}w'\xi_0^l;z) \notag \\
=&\sum_{i=0}^k\sum_{p=0}^{k-i}\sum_{q=0}^l\Li(\xi_0^i;1-z)\Li(\xi_1^p;z)\Li(\reg^{10}(\xi_1^{k-i-p}w'\xi_0^{l-q});z)\Li(\xi_0^q;z) \notag \\
=&\sum_{i=0}^k\sum_{p=0}^{k-i}\sum_{q=0}^l\frac{\log^i(1-z)}{i!}\frac{(-\log(1-z))^p}{p!}\Li(\reg^{10}(\xi_1^{k-i-p}w'\xi_0^{l-q});z)\frac{\log^q z}{q!} \notag \\
=&\sum_{r=0}^k\sum_{q=0}^l \left(\sum_{i+p=r}\frac{1}{i!}\frac{(-1)^p}{p!}\right)\log^r(1-z)\Li(\reg^{10}(\xi_1^{k-r}w'\xi_0^{l-q});z)\frac{\log^q z}{q!}. \label{proof:gen_inv_limit:2}
\end{align}
From the identity
\begin{equation}
\sum_{i+p=r}\frac{1}{i!}\frac{(-1)^p}{p!}=\begin{cases}
1&(r=0), \\
0&(r\neq 1), 
\end{cases}
\end{equation}
it follows that 
\begin{align}
\eqref{proof:gen_inv_limit:2}&=\sum_{q=0}^l \Li(\reg^{10}(\xi_1^{k}w'\xi_0^{l-q});z)\frac{\log^q z}{q!} \notag \\
&\to \Li(\reg^{10}(\xi_1^kw'\xi_0^l);1)=\zeta(\reg^{10}(\xi_1^kw'\xi_0^l)) \qquad (z \to 1).
\end{align}
\end{proof}

We should observe that the generalized inversion formulas are overdetermined and contains some relations of multiple zeta values. For instance, replacing $w$ to $\tau(w)$ in \eqref{eq:gif}, we have 
\begin{align*}
\sum_{uv=\tau(w)}\Li(\tau(u);1-z) \Li(v;z)=\zeta(\reg^{10}(\tau(w))). 
\end{align*}
Furthermore, in this formula, replace $z$ to $1-z$ and put $u=\tau(v'), v=\tau(u')$. Then we obtain 
\begin{align}\label{eq:gif_tau}
\sum_{u'v'=w} \Li(v';z) \Li(\tau(u');1-z) = \zeta(\reg^{10}(\tau(w))). 
\end{align}
The left hand side of \eqref{eq:gif_tau} coincides with the left hand side of \eqref{eq:gif}, 
so that we have the duality relations for multiple zeta values
\begin{align}\label{eq:dual_mzv}
\zeta(\reg^{10}(w)) = \zeta(\reg^{10}(\tau(w))) 
\end{align}
for any word $w$ in $S$.

\section{The Riemann-Hilbert problem of additive type for multiple polylogarithms}\label{sec:MPL_RH}

In this section, we solve the recursive Riemann-Hilbert problem of additive type corresponding to the inverse problem of the generalized inversion formulas \eqref{eq:gif}.\\

Note that the generalized inversion formulas \eqref{eq:gif} for a word $w \in S^{10}$ read
\begin{align}
&\Li(\tau(w);1-z)+\Li(w;z)=\zeta(w)-\sum_{\substack{uv=w\\u,v \neq \bnull}}\Li(\tau(u);1-z)\Li(v;z).
\end{align}
The equation says that the right hand side, which is holomorphic on $\D_{0} \cap \D_{1}$, decomposes to 
the sum of $\Li(\tau(w);1-z)$ and $\Li(w;z)$, which are holomorphic on $\D_{0}$ and $\D_{1}$ respectively. Moreover the length of words appeared in the right hand side are less than the length of the word $w$. Hence this decomposition is considered as a recursive Riemann-Hilbert problem of additive type.

\begin{thm}\label{thm:MPL_RH}
There exist uniquely 
$\sh$-homomorphisms $f^{(0)}(\bullet;z),\; f^{(1)}(\bullet;z): S \to \C$, which satisfy
\begin{equation}
f^{(0)}(\xi_0;z)=\log z, \quad  f^{(1)}(\xi_0;z)=\log (1-z),
\end{equation}
and the following three conditions:
\begin{enumerate}
\item For any word $w \in S$, $f^{(0)}(w;z)$ and $f^{(1)}(w;z)$ enjoy the functional equations
\begin{equation}\label{aRH}
\sum_{uv=w} f^{(1)}(\tau(u);z) f^{(0)}(v;z) = \zeta(\reg^{10}(w)) \quad  (z \in \D_{0} \cap \D_{1}).
\end{equation}
\item For any word $w \in S^{10}$, $f^{(0)}(w;z)$ and $f^{(1)}(w;z)$ are holomorphic on $\D_0$ and $\D_1$ respectively and satisfy the asymptotic conditions
\begin{equation}\label{Asym}
\frac{d}{dz}f^{(i)}\big(w;z\big) \to 0 \quad (z \to \infty, \ z \in \D_{i}).
\end{equation}
\item For any word $w \in S^{10}$, $f^{(0)}(w;z)$ satisfies the normalizing conditions
\begin{equation}\label{Norm}
f^{(0)}\big(w;0  \big)=0.
\end{equation}
\end{enumerate}

The solutions $f^{(i)}(\bullet;z)$ are expressed in terms of extended multiple polylogarithms as follows; 
\begin{align}
f^{(0)}(w;z)&=\Li(w;z),\\
f^{(1)}(w;z)&=\Li(w;1-z).
\end{align}
\end{thm}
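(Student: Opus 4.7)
The plan is to prove the theorem in two stages—existence and uniqueness—with the substantive content residing in the latter.

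For existence, I would verify that $f^{(0)}(w;z)=\Li(w;z)$ and $f^{(1)}(w;z)=\Li(w;1-z)$ satisfy all stipulated conditions. The shuffle homomorphism property is the classical shuffle relation for iterated integrals, and the normalizations $f^{(0)}(\xi_0;z)=\log z$ and $f^{(1)}(\xi_0;z)=\log(1-z)$ are immediate from \eqref{eq:extendedMPL}. The functional equations \eqref{aRH} are precisely the generalized inversion formulas (Proposition \ref{prop:gif}). Holomorphy of $\Li(\reg^0(w);z)$ on $\D_0$ and of $\Li(\reg^0(w);1-z)$ on $\D_1$, together with the vanishing of $\Li(\reg^0(w);z)$ at $z=0$ whenever $\reg^0(w)\neq\bnull$, follow at once from the iterated-integral representation. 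Finally, the asymptotic condition \eqref{Asym} is a consequence of Lemma \ref{lem:MPL_diff}: $\frac{d}{dz}\Li(\xi_i u;z)$ equals $\Li(u;z)/z$ or $\Li(u;z)/(1-z)$, and since $\Li(u;z)$ grows only polynomially in $\log z$ at infinity, its derivative decays there.

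For uniqueness, I would proceed by induction on $n=|w|$. The case $n=0$ is trivial: $f^{(i)}(\bnull;z)=1$ for any shuffle homomorphism. Assume the conclusion for all words of length less than $n$ and let $|w|=n$. Isolating the length-$n$ terms of \eqref{aRH} gives
\[
f^{(0)}(w;z)+f^{(1)}(\tau(w);z)=\Phi(w;z),
\]
where $\Phi$ is known by induction and holomorphic on $\D_0\cap\D_1$. Using Proposition \ref{prop:reutanuer} to expand $w=\sum_j w_j\sh\xi_0^j$ and $\tau(w)=\sum_i (\tau(w))_i\sh\xi_0^i$ with $w_j,(\tau(w))_i\in S^0$, and invoking the shuffle homomorphism property with the prescribed values $f^{(0)}(\xi_0;z)=\log z$ and $f^{(1)}(\xi_0;z)=\log(1-z)$, I would peel off the logarithmic contributions (whose coefficients involve $w_j$ and $(\tau(w))_i$ of length strictly less than $n$, hence are known) to reduce the equation to
\[
f^{(0)}(\reg^0(w);z)+f^{(1)}(\reg^0(\tau(w));z)=\Psi(w;z),
\]
where $\Psi$ is known and holomorphic on $\D_0\cap\D_1$. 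This is the core additive Riemann--Hilbert problem: split $\Psi$ into a summand holomorphic on $\D_0$ and one holomorphic on $\D_1$.

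Uniqueness of the split is a Liouville-type argument: if two decompositions $\Psi=g_1^{(0)}+g_1^{(1)}=g_2^{(0)}+g_2^{(1)}$ exist, then $h:=g_1^{(0)}-g_2^{(0)}=g_2^{(1)}-g_1^{(1)}$ is holomorphic on $\D_0\cup\D_1=\C$, hence entire; by \eqref{Asym} its derivative tends to $0$ as $z\to\infty$, so $h'$ extends holomorphically to $\bP^1$ and must be identically $0$, whence $h$ is constant, and the normalization \eqref{Norm} forces $h\equiv 0$. This determines $f^{(0)}(\reg^0(w);z)$ and $f^{(1)}(\reg^0(\tau(w));z)$ uniquely, and by the existence part they must coincide with $\Li(\reg^0(w);z)$ and $\Li(\reg^0(\tau(w));1-z)$. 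Running the same reduction with $\tau(w)$ in place of $w$ recovers the remaining two regular parts at length $n$; together with the shuffle homomorphism property, this completes the inductive step.

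The main obstacle is the mutual consistency of the two reductions attached to $w$ and $\tau(w)$: their right-hand sides carry the constants $\zeta(\reg^{10}(w))$ and $\zeta(\reg^{10}(\tau(w)))$ respectively, and for the entire inductive construction to yield honest shuffle homomorphisms, these constants must be compatible. This compatibility is precisely the duality relation \eqref{eq:dual_mzv} (which Proposition \ref{prop:dual_mzv} already identifies as the consistency condition for the overdetermined system \eqref{aRH}), and it is the reason the hypothesis is indispensable.
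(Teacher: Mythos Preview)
Your proposal is correct and follows the same skeleton as the paper: induction on word length, peeling off the logarithmic parts via the shuffle decomposition $w=\sum_j w_j\sh\xi_0^j$ to reduce to the $\reg^0$ pieces, and a Liouville argument on $\D_0\cup\D_1=\C$ to pin down the split up to constants. The only substantive difference is in how the constant attached to $f^{(1)}$ is fixed and, correspondingly, where duality is actually \emph{used}. You appeal to the existence part (Proposition~\ref{prop:gif}) to conclude that the unique split of $\Psi$ must be the one furnished by $\Li$; the paper instead differentiates \eqref{aRH} and applies Lemma~\ref{lem:gen_inv_limit}\,\eqref{lem:gen_inv_limit_1} directly to obtain $f^{(i)}=\Li+c^{(i)}$, then substitutes back into \eqref{aRH} and computes the limits $z\to1$ and $z\to0$ via Lemma~\ref{lem:gen_inv_limit}\,\eqref{lem:gen_inv_limit_2}, getting $c^{(1)}=0$ and $c^{(1)}=\zeta(\reg^{10}(w))-\zeta(\reg^{10}(\tau(w)))$ respectively. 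This is the precise place where the duality hypothesis \eqref{eq:dual_mzv} is invoked in the paper; in your version it is hidden inside Proposition~\ref{prop:gif} (of which duality is a consequence, cf.\ Proposition~\ref{prop:dual_mzv}), which is why your final paragraph locating the obstacle at ``the two reductions attached to $w$ and $\tau(w)$'' remains heuristic rather than pointing to a specific step.
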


\begin{proof}
By Proposition \ref{prop:gif}, the functions $f^{(0)}(w;z)=\Li(w;z)$ and $f^{(1)}(w;z)=\Li(w;1-z)$ satisfy all of the previous conditions.\\

We show that $f^{(i)}(w;z)$ are uniquely determined by using induction on the length of a word $w$.

First, in the case of $w=\xi_0$, the equation \eqref{aRH} reads
\begin{equation}
f^{(1)}(\xi_1;z)+f^{(0)}(\xi_0;z) = 0.
\end{equation}
Therefore we obtain
\begin{equation}\label{proof:MPL_RH:f1_xi1}
f^{(1)}(\xi_1;z)=-f^{(0)}(\xi_0;z)=-\log z=\Li(\xi_1;1-z).
\end{equation}
In a similar fashion, we have
\begin{equation}\label{proof:MPL_RH:f0_xi1}
f^{(0)}(\xi_1;z)=-f^{(1)}(\xi_0;z)=-\log(1-z)=\Li(\xi_1;z)
\end{equation}
in the case of $w=\xi_1$.\\

Next, we assume that $f^{(0)}(w';z)=\Li(w';z)$ and $f^{(1)}(w';z)=\Li(w';1-z)$ for words $w'$ whose length is less than $r$.

Now, if the result $f^{(0)}=\Li(w;z)$ and $f^{(1)}=\Li(w;1-z)$ hold for all words $w \in S^{10}$ of length $r$, we obtain
\begin{align*}
f^{(0)}(w;z)&=f(\sum_{i,j}\xi_1^i\sh w_{ij} \sh \xi_0^j; z)\\
&=\sum_{i,j}\frac{f(\xi_1;z)^i}{i!}f(w_{ij};z)\frac{f(\xi_0; z)^j}{j!}\\
&=\sum_{i,j}\frac{\Li(\xi_1;z)^i}{i!}\Li(w_{ij};z)\frac{\Li(\xi_0; z)^j}{j!}\\
&=\Li(\sum_{i,j}\xi_1^i\sh w_{ij} \sh \xi_0^j; z)=\Li(w;z)
\end{align*}
for any word $w \in S$ of length $r$, since $f^{(0)}(w;z)$ and $\Li(w;z)$ are both shuffle homomorphisms and the word $w$ has the unique decomposition \eqref{sh_decomposition}. In the similar way, we have also $f^{(1)}(w;z)=\Li(w;1-z)$ for any word $w \in S$.

Therefore it suffice to show that $f^{(0)}(w;z)=\Li(w;z)$ and $f^{(0)}(w;z)=\Li(w;1-z)$ for any word $w \in S^{10}$ of length $r$.\\

Let $w=\xi_{i_1}\cdots \xi_{i_r}$ be a word of $S^{10}$ of length $r$. Under the assumption of induction, the equation \eqref{aRH} becomes
\begin{align}\label{aRH2}
& f^{(1)}(\tau(w);z)+\sum_{k=1}^{r-1} \Li(\tau(\xi_{i_1}\cdots \xi_{i_k});1-z)\Li(\xi_{i_{k+1}} \cdots \xi_{i_r};z)+f^{(0)}(w;z) \notag \\
=&\zeta(\reg^{10}(w))=\zeta(w).
\end{align}

Now we show $f^{(1)}(w;z)=\Li(w;1-z)$ and $f^{(0)}(w;z)=\Li(w;z)$ by using \eqref{Asym}, \eqref{Norm} 
and \eqref{aRH2}. According to Lemma \ref{lem:gen_inv_limit} \eqref{lem:gen_inv_limit_1} we have
\begin{align*}
&d\left(\sum_{k=1}^{r-1} \Li(\tau(\xi_{i_1}\cdots \xi_{i_k});1-z)\Li(\xi_{i_{k+1}}\cdots \xi_{i_r};z)\right)\\
=&-d\Li(\xi_{i_1}\cdots \xi_{i_r};z)-d\Li(\tau(\xi_{i_1}\cdots \xi_{i_{r}});1-z).
\end{align*}
Thus the differentiation of \eqref{aRH2} leads to the equation
\begin{equation}\label{aRH2_diff}
df^{(0)}(w;z)-d\Li(w;z)=-df^{(1)}(\tau(w);z)+d\Li(\tau(w);1-z).
\end{equation}
Here we notice that both $w$ and $\tau(w)$ are words in $S^{10}$.

Since the left hand side (resp. right hand side) of \eqref{aRH2_diff} is holomorphic on $\D_0$ (resp. $\D_1$), 
the both side of \eqref{aRH2_diff} are entire functions. By \eqref{Asym}, due to Liouville's theorem, we obtain
\begin{gather*}
df^{(0)}(w;z)-d\Li(w;z)=0,\\
df^{(1)}(\tau(w);z)-d\Li(\tau(w);1-z)=0.
\end{gather*}
Thus the functions $f^{(0)}(w;z)$ and $f^{(1)}(w;z)$ are determined as
\begin{align}
f^{(0)}(w;z)&=\Li(w;z)+c^{(0)}(w),\label{aRH_constant_0} \\
f^{(1)}(\tau(w);z)&=\Li(\tau(w);1-z)+c^{(1)}(w). \label{aRH_constant_1}
\end{align}
where $c^{(0)}(w)$ and $c^{(1)}(w)$ are integration constants. 

Finally, we determine the integral constants $c^{(0)}(w)$ and $c^{(1)}(w)$. By \eqref{Norm}, $c^{(0)}(w)=0$ is clear. Substituting \eqref{aRH_constant_0} and \eqref{aRH_constant_1} to \eqref{aRH}, we have
\begin{equation*}
\sum_{uv=w}\Li(\tau(u);1-z)\Li(v;z)+c^{(1)}(w)=\zeta(w).
\end{equation*}
In this relation, letting $z \to 1 (z \in \D_0 \cap \D_1)$, we obtain, from Lemma \ref{lem:gen_inv_limit} \eqref{eq:lim_gif_1},
\begin{equation}
\zeta(w)+c^{(1)}(w)=\zeta(w).
\end{equation}
Thus we have
\begin{equation}
c^{(1)}(w)=0
\end{equation}
and have completed the proof of this theorem.

\end{proof}

We note that relations among multiple zeta values by derived from the generalized inversion formula (e.g. the duality relation \eqref{eq:dual_mzv}) can be interpreted as the consistency condition for existing the solution of this Riemann-Hilbert problem \eqref{aRH}.

\section{The Riemann-Hilbert problem corresponding to the KZ equation of one variable}\label{sec:KZ_RH}

In this section, we consider the Riemann-Hilbert problem as the inverse problem of the connection problem 
of the KZ equation of one variable \eqref{eq:connection01'} and discuss the relationship 
between this Riemann-Hilbert problem and Theorem \ref{thm:MPL_RH}.\\

The connection formula \eqref{eq:connection01'} can be written as
\begin{equation}\label{eq:connection01''}
\left(\cL^{(1)}(z)\right)^{-1}\cL^{(0)}(z)=\varPhi_{\rm KZ}.
\end{equation}
We show that the fundamental solutions $\cL^{(0)}(z)$ and $\cL^{(1)}(z)$ of the KZ equation \eqref{eq:1KZ} are determined by this equation as a Riemann-Hilbert problem of multiplicative type.

The Riemann-Hilbert problem corresponding to the KZ equation is to find $\tcU$-valued functions $F^{(0)}(z)$ and $F^{(1)}(z)$ which are grouplike, 
satisfy the relation
\begin{equation}
\left(F^{(1)}(z)\right)^{-1}F^{(0)}(z)=\varPhi_{\rm KZ}.
\end{equation}
in $\D_0\cap \D_1$ and some conditions. Applying Theorem \ref{thm:MPL_RH}, we can find a solution to this Riemann-Hilbert problem. \\

Before stating the theorem, we mention the following proposition which will be used in the comment after the proof of the theorem.

\begin{prop}[\cite{OkU}]\label{prop:dual_dr}
The duality relations \eqref{eq:dual_mzv} of the multiple zeta values are equivalent to the duality 
of the Drinfel'd associator
\begin{equation}\label{eq:dual_dr}
\varPhi_{\rm KZ}(X_0,X_1) = \Big(\varPhi_{\rm KZ}(-X_1,-X_0)\Big)^{-1}. 
\end{equation}
\end{prop}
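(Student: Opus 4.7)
The plan is to reduce both sides of \eqref{eq:dual_dr} to a common form and then equate coefficients word-by-word. The key observation is that the substitution $(X_0,X_1) \mapsto (-X_1,-X_0)$ is exactly the automorphism $t_*$ introduced just before Proposition \ref{prop:connection1}, so $\varPhi_{\rm KZ}(-X_1,-X_0) = t_*(\varPhi_{\rm KZ})$. Since $t_*$ sends the primitive generators $X_0, X_1$ to primitive elements again, it is a Hopf algebra automorphism of $\widetilde{\cU}$, so $t_*(\varPhi_{\rm KZ})$ is still grouplike. Applying the grouplike identity $F^{-1} = \rho(F)$ (which is the $z$-independent avatar of Lemma \ref{lem:grouplike_reciprocal}) gives
$$\bigl(\varPhi_{\rm KZ}(-X_1,-X_0)\bigr)^{-1} = \rho\bigl(t_*(\varPhi_{\rm KZ})\bigr).$$

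Next, a direct check on the generators $X_0, X_1$ shows $\rho \circ t_* = t_* \circ \rho$, and by the definition $T = t_* \circ \rho$ both composites equal $T$; therefore the right-hand side is $T(\varPhi_{\rm KZ})$. Applying formula \eqref{eq;tau_T_duality} to the expansion \eqref{eq:DA} of the associator then yields
$$T(\varPhi_{\rm KZ}) = \sum_{w \in S} \zeta(\reg^{10}(\tau(w)))\,W,$$
whereas $\varPhi_{\rm KZ} = \sum_{w \in S} \zeta(\reg^{10}(w))\,W$. Comparing the coefficients of each basis word $W$, the identity \eqref{eq:dual_dr} is equivalent to $\zeta(\reg^{10}(w)) = \zeta(\reg^{10}(\tau(w)))$ for every word $w \in S$, which is precisely the duality relation \eqref{eq:dual_mzv}. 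Since each step is an equivalence rather than an implication, the two assertions are interderivable.

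The main obstacle, such as it is, is purely algebraic bookkeeping: one must keep track of which of $t_*, \rho, T$ are algebra homomorphisms and which are anti-homomorphisms, verify that $t_*$ respects the coproduct (so that grouplikeness is preserved under the substitution), and confirm the commutation $\rho \circ t_* = t_* \circ \rho$ on generators. Each of these is a one-line check on $X_0$ and $X_1$. No analytic input is required — once the substitution $(X_0,X_1)\mapsto(-X_1,-X_0)$ has been recognized as the action of $t_*$, the whole proposition is a coefficient comparison in $\widetilde{\cU}$.
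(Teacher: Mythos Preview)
Your proof is correct and follows essentially the same route as the paper: identify the substitution $(X_0,X_1)\mapsto(-X_1,-X_0)$ with $t_*$, invert the grouplike element via the antipode to obtain $T(\varPhi_{\rm KZ})$, and then use \eqref{eq;tau_T_duality} to compare coefficients. The paper's proof is just a terser version of yours, leaving implicit the checks (grouplikeness preserved under $t_*$, and $\rho\circ t_* = T$) that you spell out.
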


\begin{proof}
Since the Drinfel'd associator is a grouplike element, we have 
\begin{align*}
\Big(\varPhi_{\rm KZ}(-X_1,-X_0)\Big)^{-1} &= \Big( \sum_{w \in S} \zeta(\reg^{10}(w)) t_*(W) \Big)^{-1} \\
& = \sum_{w \in S} \zeta(\reg^{10}(w)) T(W) \\
& = \sum_{w \in S} \zeta(\reg^{10}(\tau(w))) W.
\end{align*}
Hence \eqref{eq:dual_dr} is equivalent to \eqref{eq:dual_mzv}. 
\end{proof}

The relation \eqref{eq:dual_dr} follows from the connection problem \eqref{eq:connection01''} by changing $z$ to $1-z$ and taking the reciprocal.\\

Now we can formulate and solve the Riemann-Hilbert problem, and reconstruct the fundamental solutions of 
the KZ equation \eqref{eq:1KZ} normalized at $z=0$ and $1$ from the Drinfel'd associator $\varPhi_{\rm KZ}$ 
as the following theorem.

\begin{thm}\label{thm:KZ_RH}
There exist uniquely the $\tcU$-valued functions 
$F^{(0)}(z)$ and $F^{(1)}(z)$ defined by 
\begin{align}
F^{(0)}&=(1-z)^{-X_1}\tF^{(0)}(z)z^{X_0},&\tF^{(0)}(z)&=\sum_{w \in S}\tf^{(0)}(\reg^{10}(w);z)W, \label{hF_reg0}\\
F^{(1)}&=z^{X_0}\tF^{(1)}(z)(1-z)^{-X_1},&\tF^{(1)}(z)&=\sum_{w \in S}\tf^{(1)}(\reg^{10}(w);z)W, \label{hF_reg1}
\end{align}
which are grouplike, and enjoy the following conditions:
\begin{enumerate}
\item $F^{(0)}(z)$ and $F^{(1)}(z)$ satisfy the functional equation
\begin{equation}\label{RH}
\left(F^{(1)}(z)\right)^{-1}F^{(0)}(z)=\varPhi_{\rm KZ}.
\end{equation}
\item $\tF^{(0)}(z)$ and $\tF^{(1)}(z)$ are holomorphic on $\D_0$ and $\D_1$ respectively and satisfy the asymptotic condition
\begin{equation}\label{Asym_F}
\frac{d}{dz}\tF^{(i)}(z)\to \bunit \qquad (z \to \infty, z \in \D_i).
\end{equation}
\item $\tF^{(0)}(z)$ satisfies the normalizing condition
\begin{equation}\label{Norm_F}
\tF^{(0)}(0)=\bunit.
\end{equation}
\end{enumerate}

Then the functions $F^{(0)}(z)$ and $F^{(1)}(z)$ give the fundamental solutions of the KZ equation of one variable normalized at $z=0$ and $1$ respectively.
\end{thm}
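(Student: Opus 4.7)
The plan is to reduce the multiplicative Riemann-Hilbert problem \eqref{RH} to the additive one solved in Theorem \ref{thm:MPL_RH}. To this end, introduce
\[
F^{(0)}(z) := \hF^{(0)}(z)\,z^{X_0}, \qquad F^{(1)}(z) := \hF^{(1)}(z)\,(1-z)^{-X_1},
\]
both of which are grouplike since their constituents are. By Lemma \ref{lem:grouplike_shufflehom}, the coefficient functions $f^{(i)}(\bullet;z)$ in the expansions $F^{(i)}(z) = \sum_{w \in S} f^{(i)}(w;z)W$ are shuffle homomorphisms, and \eqref{RH} becomes $(F^{(1)}(z))^{-1}F^{(0)}(z) = \varPhi_{\rm KZ}$. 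Expanding the inverse via Lemma \ref{lem:grouplike_reciprocal} and matching coefficients of $W$ against $\varPhi_{\rm KZ} = \sum_w \zeta(\reg^{10}(w))W$ yields
\[
\sum_{uv=w} f^{(1)}(\rho^*(u);z)\,f^{(0)}(v;z) = \zeta(\reg^{10}(w)) \qquad (w \in S).
\]
Setting $\tilde{f}^{(1)}(w;z) := f^{(1)}(t^*(w);z)$, which is again a shuffle homomorphism because $t^*$ is a shuffle automorphism, and using $\tau = t^* \circ \rho^*$, the above system becomes precisely the functional equation \eqref{aRH} of Theorem \ref{thm:MPL_RH} for the pair $(f^{(0)},\tilde{f}^{(1)})$.

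I would then verify the remaining hypotheses of Theorem \ref{thm:MPL_RH}. A direct expansion of $\hF^{(0)}(z)\,z^{X_0}$, using $\reg^0(\xi_0)=0$ and the identity $\hf^{(0)}(\bnull;z) = 1$ (which holds because $\hF^{(0)}$ is grouplike), gives $f^{(0)}(\xi_0;z) = \log z$; dually $f^{(1)}(\xi_1;z) = -\log(1-z)$, and hence $\tilde{f}^{(1)}(\xi_0;z) = -f^{(1)}(\xi_1;z) = \log(1-z)$. Because words in $S^0$ have no trailing $\xi_0$, one checks that $f^{(0)}|_{S^0}=\hf^{(0)}|_{S^0}$, so the holomorphy of $f^{(0)}(\reg^0(w);z)$ on $\D_0$, the asymptotic condition \eqref{Asym} and the normalization \eqref{Norm} follow directly from the corresponding hypotheses on $\hF^{(0)}$; the analogous statements hold for $\tilde{f}^{(1)}$ on $\D_1$. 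Finally, Proposition \ref{prop:dual_dr} converts the hypothesis \eqref{eq:dual_dr} into the duality relations \eqref{eq:dual_mzv} required by Theorem \ref{thm:MPL_RH}.

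Invoking Theorem \ref{thm:MPL_RH} then yields uniquely $f^{(0)}(w;z) = \Li(w;z)$ and $\tilde{f}^{(1)}(w;z) = \Li(w;1-z)$, and thus $f^{(1)}(w;z) = \Li(t^*(w);1-z)$. Comparing with the representations \eqref{eq:fundamental0} and \eqref{eq:fundamental1} of the fundamental solutions, I identify $F^{(0)}(z) = \cL^{(0)}(z)$ and $F^{(1)}(z) = \cL^{(1)}(z)$, from which the desired conclusion that $\hF^{(0)}(z)\,z^{X_0}$ and $\hF^{(1)}(z)\,(1-z)^{-X_1}$ are the fundamental solutions normalized at $z=0$ and $z=1$ follows immediately. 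Existence is supplied by taking $\hF^{(i)} := \hcL^{(i)}$, which satisfies all stated conditions by Propositions \ref{prop:KZE1} and \ref{prop:connection1}.

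The hard part will be the careful bookkeeping of the transformation $t^*$ when passing between $f^{(1)}$ and $\tilde{f}^{(1)}$, together with tracking how the logarithmic factors $z^{X_0}$ and $(1-z)^{-X_1}$ distribute across the grading via Proposition \ref{prop:reutanuer}, so that the initial, asymptotic and normalization conditions imposed on $\hF^{(i)}$ transfer cleanly to those on the pair $(f^{(0)},\tilde{f}^{(1)})$ demanded by the additive theorem.
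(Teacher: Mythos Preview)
Your proposal is correct and follows essentially the same route as the paper's proof: both reduce the multiplicative problem \eqref{RH} to the additive Theorem \ref{thm:MPL_RH} by absorbing the factors $z^{X_0}$ and $(1-z)^{-X_1}$, extracting coefficient shuffle homomorphisms via Lemmas \ref{lem:grouplike_shufflehom}--\ref{lem:grouplike_reciprocal}, and invoking Proposition \ref{prop:dual_dr} to convert \eqref{eq:dual_dr} into \eqref{eq:dual_mzv}. The only cosmetic difference is that the paper defines the twisted coefficients $f^{(1)}$ directly by the formula $f^{(1)}(w\xi_0^n;z)=\sum_{j}\hf^{(1)}(t^*\!\circ\reg^0(w\xi_0^{n-j});z)\tfrac{\log^j(1-z)}{j!}$ and then reassembles to show $(1-z)^{X_1}(\hF^{(1)})^{-1}=\sum_w f^{(1)}(w;z)T(W)$, whereas you first form the grouplike products $F^{(i)}$, read off their coefficients, and then twist by $t^*$; the identity $\reg^1=t^*\circ\reg^0\circ t^*$ makes these two bookkeepings equivalent.
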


\begin{proof}
We reduce this problem to Theorem \ref{thm:MPL_RH}. Put
\begin{align}
F^{(0)}(z)&=\sum_{w \in S}f^{(0)}(w;z)W,\\
F^{(1)}(z)&=\sum_{w \in S}f^{(1)}(t^{*}(w);z)W.
\end{align}
Since $F^{(0)}(z)$ and $F^{(1)}(z)$ are grouplike, by virtue of Lemma \ref{lem:grouplike_shufflehom} and \ref{lem:grouplike_reciprocal}, the functions $f^{(i)}(\bullet;z)$ are regarded as shuffle homomorphisms from $S$ to $\C$, and the reciprocal of $F^{(1)}(z)$ is given by
\begin{equation*}
\left(F^{(1)}(z)\right)^{-1}=\sum_{w \in S}f^{(1)}(t^{*}\circ\rho^*(w);z)W=\sum_{w \in S}f^{(1)}(\tau(w);z)W.
\end{equation*}

Under these notation, the equation \eqref{RH} can be written as
\begin{equation*}
\left(\sum_{u \in S} f^{(1)}(\tau(u);z) U\right)\left(\sum_{v \in S}f^{(0)}(v;z)V \right) = \varPhi_{\rm KZ}=\sum_{w \in S} \zeta(\reg^{10}(w))W.
\end{equation*}
The coefficient of $W$ of this equation is the equation \eqref{aRH}.\\

Next, since $f^{(0)}(w;z)$ are shuffle homomorphism and the equation \eqref{lem:reg_calc3} holds, we have
\begin{align}
F^{(0)}(z)&=\sum_{w}f^{(0)}(w;z)W \notag\\
&=\sum_{i,j}\sum_{w \in S^{10}}f^{(0)}(\xi_1^iw\xi_0^j;z)X_1^iWX_0^j \notag\\
&=\sum_{i,j}\sum_{w \in S^{10}}\sum_{s=0}^i\sum_{t=0}^jf^{(0)}(\xi_1^s;z)f^{(0)}(\reg^{10}(\xi_1^{i-s}w\xi_0^{j-t});z)f^{(0)}(\xi_0^{t};z)X_1^iWX_0^j \notag \\
&=\left(\sum_i \frac{f(\xi_1;z)^i}{i!}X_1^i\right)\left(\sum_{w \in S}f^{(0)}(\reg^{10}(w);z)W\right)\left(\sum_j \frac{f(\xi_0;z)^j}{j!}X_0^j\right).
\end{align}
Comparing this formula and \eqref{hF_reg0}, we have
\begin{equation*}
f^{(0)}(\xi_0;z)=\log z
\end{equation*}
as the coefficient of $X_0$, 
\begin{equation*}
f^{(0)}(\xi_1;z)=-\log(1-z)
\end{equation*}
as the coefficient of $X_1$, and
\begin{equation*}
f^{(0)}(w;z)=\tf^{(0)}(w;z) \qquad (w \in S^{10}\text{ : word})
\end{equation*}
as the coefficient of $W=X_0W'X_1$. Thus the asymptotic condition \eqref{Asym_F} says that
\begin{align*}
f^{(0)}(w;z)=\tf^{(0)}(w;z) \to 0 \qquad (z \in \D_0 \to \infty)
\end{align*}
for any word $w$ in $S^{10}$ and the normalizing condition \eqref{Norm_F}
\begin{align*}
f^{(0)}(w;0)=\tf^{(0)}(w;0)=0
\end{align*}
for any word $w$ in $S^{10}$.\\

In the similar way, comparing the equation
\begin{align*}
F^{(1)}(z)&=\left(\sum_{i}\frac{f^{(1)}(t^{*}(\xi_0);z)^i}{i!}X_1^i\right)\left(\sum_{w \in S}f^{(1)}(\reg^{10}(t^{*}(w));z)W\right)\left(\sum_{j}\frac{f^{(1)}(t^{*}(\xi_1);z)^j}{j!}X_0^j\right)\\
&=\left(\sum_{i}\frac{(-f^{(1)}(\xi_1;z))^i}{i!}X_1^i\right)\left(\sum_{w \in S}f^{(1)}(\reg^{10}(t^{*}(w));z)W\right)\left(\sum_{j}\frac{(-f^{(1)}(\xi_0;z))^j}{j!}X_0^j\right)
\end{align*}
and \eqref{hF_reg1}, we have
\begin{align*}
f^{(1)}(\xi_0;z)&=\log(1-z),\\
f^{(1)}(\xi_1;z)&=-\log z,\\
f^{(1)}(w;z)&=\tf^{(1)}(t^{*}(w);z) \qquad (w \in S^{10}).
\end{align*}
Thus the Asymptotic condition \eqref{Asym_F} reads
\begin{equation*}
f^{(1)}(w;z)=\tf^{(1)}(t^{*}(w);z) \to 0 \qquad (z \in \D_1 \to \infty)
\end{equation*}
for any words $w$ in $S^{10}$.\\

Therefore the functions $f^{(i)}(w;z)$ satisfy the assumptions of Theorem \ref{thm:MPL_RH}, we have
\begin{align*}
f^{(0)}(w;z)&=\Li(w;z),\\
f^{(1)}(w;z)&=\Li(w;1-z).
\end{align*}

Therefore,
\begin{align}
F^{(0)}(z)&=\sum_{w \in S}\Li(w;z)W, \\
F^{(1)}(z)&=\sum_{w \in S}\Li(t^{*}(w);1-z)W=\sum_{w \in S}\Li(w;1-z)t_{*}(W)
\end{align}
are unique solutions to this Riemann-Hilbert problem. 
The last claim follows from \eqref{eq:fundamental0_0} and \eqref{eq:fundamental1_0}.
\end{proof}

The theorem says that the duality of the Drinfel'd associator \eqref{eq:dual_dr}, which is equivalent to the duality relation of multiple polylogarithms by Proposition \ref{prop:dual_dr}, can be interpreted as the consistency condition for this Riemann-Hilbert problem.


\end{document}